\def\A{\mathscr{A}}
\def\C{\mathscr{C}}\def\H{\mathscr{H}}
\def\E{\mathbb{E}}
\def\F{\mathbb{F}}
\def\s{\mathfrak{s}}
\def\id{\mathrm{id}}
\def\op{^\mathrm{op}}
\def\Ab{\mathsf{Ab}}
\def\del{\delta}
\def\dr{\ar@{->}[r]}
\newcommand{\CC}{{\bf{C}}^{n+2}_{\C}}
\newcommand{\ov}{\overset}
\newcommand{\lra}{\longrightarrow}
\newcommand{\co}{\colon}
\newcommand{\uas}{^{\ast}}            
\newcommand{\sas}{_{\ast}}
\newcommand{\Xd}{\langle X_{\bullet},\del\rangle}  
\newcommand{\ush}{^\sharp}           
\newcommand{\ssh}{_\sharp}
\begin{document}
\baselineskip=15pt
\title{\Large{\bf Two results of $n$-exangulated categories\footnotetext{~Jian He was supported by the National Natural Science Foundation of China (Grant No. 12171230). Panyue Zhou was supported by the National Natural Science Foundation of China (Grant No. 11901190).} }}
\medskip
\author{Jian He, Jing He and Panyue Zhou}

\date{}

\maketitle
\def\blue{\color{blue}}
\def\red{\color{red}}

\newtheorem{theorem}{Theorem}[section]
\newtheorem{lemma}[theorem]{Lemma}
\newtheorem{corollary}[theorem]{Corollary}
\newtheorem{proposition}[theorem]{Proposition}
\newtheorem{conjecture}{Conjecture}
\theoremstyle{definition}
\newtheorem{definition}[theorem]{Definition}
\newtheorem{question}[theorem]{Question}
\newtheorem{remark}[theorem]{Remark}
\newtheorem{remark*}[]{Remark}
\newtheorem{example}[theorem]{Example}
\newtheorem{example*}[]{Example}
\newtheorem{condition}[theorem]{Condition}
\newtheorem{condition*}[]{Condition}
\newtheorem{construction}[theorem]{Construction}
\newtheorem{construction*}[]{Construction}

\newtheorem{assumption}[theorem]{Assumption}
\newtheorem{assumption*}[]{Assumption}

\baselineskip=17pt
\parindent=0.5cm

\begin{abstract}
\begin{spacing}{1.2}
$n$-exangulated categories were introduced by Herschend-Liu-Nakaoka which are a simultaneous generalization of $n$-exact categories and $(n+2)$-angulated categories. This paper consists of two results on $n$-exangulated categories: (1) we  give an equivalent characterization of the axiom (EA2); (2) we provide a new way to construct a closed subfunctor of an $n$-exangulated category.\\[0.2cm]
\textbf{Keywords:} $n$-exangulated categories; homotopy cartesian square; half exact functor\\[0.1cm]
\textbf{2020 Mathematics Subject Classification:} 18G80; 18G50; 18G25. \end{spacing}
\medskip
\end{abstract}

\pagestyle{myheadings}
\markboth{\rightline {\scriptsize J. He, J. He and P. Zhou }}
         {\leftline{\scriptsize Two results of $n$-exangulated categories}}

\section{Introduction}
The notion of extriangulated categories was introduced in \cite{NP}, which can be viewed as a simultaneous generalization of exact categories and triangulated categories.
 The data of such a category is a triplet $(\C,\E,\s)$, where $\C$ is an additive category, $\mathbb{E}\colon \C^{\rm op}\times \C \rightarrow \Ab$ is an additive bifunctor and $\mathfrak{s}$ assigns to each $\delta\in \mathbb{E}(C,A)$ a class of $3$-term sequences with end terms $A$ and $C$ such that certain axioms hold. Recently, Herschend--Liu--Nakaoka \cite{HLN}
introduced the notion of $n$-exangulated categories for any positive integer $n$. It should be noted that the case $n=1$ corresponds to
extriangulated categories. As typical examples we know that $n$-exact categories and $(n+2)$-angulated categories are $n$-exangulated categories, see \cite[Proposition 4.34]{HLN} and \cite[Proposition 4.5]{HLN}. However, there are some other examples of $n$-exangulated categories which are neither $n$-exact nor $(n+2)$-angulated, see \cite{HLN, HLN1, LZ,HZZ2}.

Recall that homotopy cartesian squares in triangulated categories are the triangulated analogues of the pushout and pullback squares in abelian categories. It is well-known that the axiom (TR4) is equivalent to the homotopy cartesian axiom. Recently, Kong-Lin-Wang \cite{KLW} introduced the notion of homotopy cartesian squares
and proposed a new shifted octahedron in extriangulated categories. Our first main result show that Kong-Lin-Wang's result has a higher counterpart:
\begin{theorem}\rm{ (see Theorem \ref{thm1} for details)}
Let $(\C,\E,\s)$ be a pre-$n$-exangulated category. Then $\C$ satisfies {\rm(EA2)} if and only if $\C$ satisfies {\rm(EA2-1)}.
\end{theorem}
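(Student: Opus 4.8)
The plan is to establish the two implications separately, taking as the main tool the mapping cone of a morphism of $n$-exangles, which is already available at the level of a pre-$n$-exangulated category, together with the description of distinguished $n$-exangles through the realization $\s$. The guiding principle is that {\rm(EA2)} and {\rm(EA2-1)} are two ways of packaging the same octahedral content: {\rm(EA2)} asserts the existence of a \emph{good} lift $f_\bullet$ of a morphism of the outer terms, that is, one whose mapping cone is again a distinguished $n$-exangle, whereas {\rm(EA2-1)} asserts that a naturally associated $(n+2)$-term diagram is \emph{homotopy cartesian}. I would fix the two distinguished $n$-exangles $\langle X_\bullet,\delta\rangle$ and $\langle Y_\bullet,\rho\rangle$ and the comparison morphism supplied by the hypotheses, normalized so that $f_{n+1}=\id$ (equivalently $Y_{n+1}=X_{n+1}$), and work throughout with the $(n+2)$-term complex whose end terms are $X_0$ and $Y_n$ and whose middle objects are the direct sums $X_i\oplus Y_{i-1}$ for $1\le i\le n$.

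For the implication {\rm(EA2)} $\Rightarrow$ {\rm(EA2-1)}, I would start from the good lift $f_\bullet$ provided by {\rm(EA2)} and observe that, after deleting the contractible complex $X_{n+1}\xrightarrow{\id}X_{n+1}$ sitting in the top two degrees, the mapping cone of $f_\bullet$ becomes exactly the complex described above. Distinguishedness of this reduced mapping cone, which {\rm(EA2)} supplies, is precisely the homotopy cartesian condition required by {\rm(EA2-1)}. It then remains only to identify the extension in $\E(Y_n,X_0)$ realized by this complex with the one prescribed in {\rm(EA2-1)}; this I would obtain by combining the compatibility of $\delta$ and $\rho$ under push-forward and pull-back along the structure maps (functoriality axiom {\rm(R0)}) with the additivity of the realization (axiom {\rm(R1)}), the latter used to split the direct summands in the middle degrees.

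For the converse {\rm(EA2-1)} $\Rightarrow$ {\rm(EA2)}, I would reverse the construction: starting from the input of {\rm(EA2)}, apply {\rm(EA2-1)} to produce the homotopy cartesian diagram together with its realizing extension. Reading that diagram degreewise and reinstating the contractible summand at the top recovers a morphism of $n$-exangles $f_\bullet$ whose mapping cone is, up to the canonical isomorphism of $(n+2)$-term complexes, the homotopy cartesian diagram just produced. Distinguishedness of the homotopy cartesian diagram is therefore the goodness of $f_\bullet$ demanded by {\rm(EA2)}, and the matching of the extensions again follows from {\rm(R0)} and {\rm(R1)}.

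The hard part will be the bookkeeping in the $n$ genuinely middle degrees. For $n=1$ the homotopy cartesian square has the single middle term $X_1\oplus Y_0$ and the passage between the two axioms is the short diagram chase of Kong-Lin-Wang; for general $n$ the comparison diagram has $n$ middle objects, each a direct sum, with matrix differentials whose signs must be tracked so that consecutive composites vanish and the complex is genuinely the reduced mapping cone. The most delicate point, I expect, is to verify that the isomorphism of $n$-exangles intertwining the {\rm(EA2)} description and the {\rm(EA2-1)} description is natural simultaneously in all degrees, so that the realization of the constructed extension agrees with the homotopy class of the assembled complex everywhere at once; this is exactly where the realization axioms of the ambient pre-$n$-exangulated structure must be used in full, not merely at the two end terms.
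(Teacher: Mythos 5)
There is a genuine gap, and it is the same one in both directions: you treat the inputs of {\rm(EA2)} and {\rm(EA2-1)} as interchangeable, but they are anchored at \emph{opposite ends} of the complexes. {\rm(EA2)} starts from a morphism of extensions $(\id_{X_0},c)$, i.e.\ from the equality $\delta=c^{*}\delta'$ with the \emph{last} component $c$ prescribed; {\rm(EA2-1)} starts from a morphism $\varphi_1$ in degree $1$ with $\varphi_1f_0=g_0$ and \emph{no} assumed relation between $\delta$ and $\delta'$. In your direction {\rm(EA2-1)}$\Rightarrow${\rm(EA2)}, after lifting $(\id,c)$ by {\rm(R0)} (a step you omit but which is cheap) and applying {\rm(EA2-1)} to the resulting $\varphi'_1$, the completion it returns has some last component $\varphi''_{n+1}$, and nothing forces $\varphi''_{n+1}=c$; yet {\rm(EA2)} demands a good lift of $(\id_{X_0},c)$ itself, end component included. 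So your sentence ``distinguishedness of the homotopy cartesian diagram is therefore the goodness of $f_\bullet$ demanded by {\rm(EA2)}'' is false as stated. The paper's proof spends essentially all of its length repairing exactly this: from $(c-\varphi''_{n+1})^{*}\delta'=0$ and the exactness of $\C(X_{n+1},Y_n)\to\C(X_{n+1},Y_{n+1})\xrightarrow{\delta'_{\sharp}}\E(X_{n+1},X_0)$ it extracts $h_{n+1}$ with $g_nh_{n+1}=c-\varphi''_{n+1}$, then inductively builds homotopies $h_i$, sets $\varphi_i=\varphi''_i+h_{i+1}f_i+g_{i-1}h_i$, and verifies through an explicit unipotent isomorphism of complexes that the corrected lift is still good. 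No step in your proposal plays this role; your ``most delicate point'' (signs and naturality of the identification of cones) is not where the difficulty lies.

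The direction {\rm(EA2)}$\Rightarrow${\rm(EA2-1)} has the symmetric gap. You cannot ``start from the good lift provided by {\rm(EA2)}'': to invoke {\rm(EA2)} one must first manufacture $\varphi_{n+1}$ with $\delta=\varphi_{n+1}^{*}\delta'$, and its existence is the exactness of $\C(X_{n+1},Y_{n+1})\xrightarrow{\delta'_{\sharp}}\E(X_{n+1},X_0)\xrightarrow{(g_0)_{*}}\E(X_{n+1},Y_1)$ at the $\E$-term. This is \emph{not} part of the definition of an $n$-exangle (which asserts exactness only at the $\C$-terms) and is not given by {\rm(R0)}/{\rm(R1)} --- note also that {\rm(R1)} is not ``additivity of the realization'' but the requirement that realized pairs be $n$-exangles. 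It is a nontrivial consequence of {\rm(EA2)} (Proposition 3.5 of \cite{HLN}); and even with $\varphi_{n+1}$ in hand, the good lift {\rm(EA2)} returns has some degree-$1$ component, not the given $\varphi_1$, so a second homotopy correction is required. The paper compresses both steps into the citation of Lemma~\ref{a2} (Proposition 3.6 of \cite{HLN}). As written, your plan would at best prove ``there exists \emph{some} $\varphi_1$ fitting into a homotopy cartesian diagram,'' not the statement for the prescribed $\varphi_1$. A smaller but telling point: your normalization $f_{n+1}=\id$ with the cocone-shaped complex $X_0\to X_1\oplus Y_0\to\cdots\to Y_n$ is dual to the conventions of both axioms here, which share the first object, take $f_0=\id$, and use the mapping cone $X_1\to X_2\oplus Y_1\to\cdots\to Y_{n+1}$; this is harmless up to duality, but it compounds the confusion about which end of the complex carries the prescribed datum.
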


Relative theories are explicated by use of closed subfunctors in $n$-exangulated categories \cite{HLN}. An additive subfunctor $\F$ of $\E$ is a closed subfunctor if $(\C,\F,\s|_\F)$ is an $n$-exangulated category,  where $\s|_\F$ is a restriction of $\s$ to $\F$, see \cite[Proposition 3.16]{HLN}. Our second main result provide a new way to construct a closed subfunctor of an $n$-exangulated category. This is a higher counterpart of Sakai's result.

\begin{theorem}\rm{ (see Theorem \ref{th1} for details)}
Let $(\C, \E, \s)$ be an $n$-exangulated category and $F\colon\C\to\A$ a half exact functor to an abelian category $\A$.
Then there exists a unique maximal closed subfunctor $\F$ of $\E$ such that $F$ becomes a right exact functor from $(\C, \F, \s|_{\F})$ to $\A$.
\end{theorem}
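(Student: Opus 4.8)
The plan is to single out $\F$ by the requirement that $F$ send the terminal deflation of a realization to an epimorphism, and then to check, in order, that this prescription defines an additive subfunctor, that the subfunctor is closed, and that it is maximal among closed subfunctors making $F$ right exact. Explicitly, for $\delta\in\E(C,A)$ with realization $A=X_0\xrightarrow{d_0}X_1\to\cdots\xrightarrow{d_n}X_{n+1}=C$, I declare $\delta\in\F(C,A)$ exactly when $F(d_n)\colon F(X_n)\to F(C)$ is an epimorphism in $\A$. Because any two realizations of a fixed $\delta$ are connected by an isomorphism of $\s$-triangles that is the identity on the two end terms, the maps induced by $F$ differ by an isomorphism, so ``$F(d_n)$ is epi'' is independent of the chosen realization and $\F(C,A)$ is well defined; the split realization of $0$ has a split epimorphism as terminal map, so $0\in\F(C,A)$. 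Throughout I use that the half exact functor $F$ is additive.

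I would first show $\F$ is an additive subfunctor of $\E$. Functoriality separates into the pushout and pullback operations. For a pushout $a_{*}\delta$ along $a\colon A\to A'$, the comparison morphism of $\s$-triangles is the identity on the terminal object, so the two terminal deflations $d_n$ and $d_n'$ satisfy $d_n'\circ f_n=d_n$; hence $F(d_n)=F(d_n')\circ F(f_n)$ being an epimorphism forces $F(d_n')$ to be one, and $a_{*}\delta\in\F$. The pullback $c^{*}\delta$ along $c\colon C'\to C$ is the crux, and this is where I expect the genuine work to be. Here the terminal square with horizontal maps $e_n\colon Y_n\to C'$ and $d_n\colon X_n\to C$ and vertical maps $f_n\colon Y_n\to X_n$ and $c\colon C'\to C$ is homotopy cartesian (in the sense of the homotopy cartesian squares for $n$-exangulated categories; see \cite{HLN}); applying the half exact $F$ to the associated $\s$-triangle therefore yields a sequence $F(Y_n)\to F(X_n)\oplus F(C')\to F(C)$ that is exact at the middle term, the second map being $(F(d_n),-F(c))$. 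A short chase in $\A$ then gives that $F(e_n)$ is epi: for $u\in F(C')$, epimorphicity of $F(d_n)$ lets one lift $F(c)(u)$ to some $v\in F(X_n)$, so $(v,u)$ lies in the kernel of the second map, hence in the image of the first, which yields $u\in\Im F(e_n)$. Thus $c^{*}\delta\in\F$. Closure of $\F(C,A)$ under the group operations then follows formally from the Baer-sum formula $\delta+\delta'=\nabla_{A*}\Delta_C^{*}(\delta\oplus\delta')$: the biproduct $\delta\oplus\delta'$ lies in $\F$ because $F$ is additive and a biproduct of epimorphisms is an epimorphism, and the pullback and pushout steps preserve $\F$ by what was just shown; negation is the pullback along $-\mathrm{id}_C$.

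Next I would verify that $\F$ is closed, that is, that $(\C,\F,\s|_{\F})$ is $n$-exangulated. By the characterization of closed subfunctors (see \cite[Proposition 3.16]{HLN}) it is enough to check that $\s|_{\F}$-deflations are closed under composition, and this is immediate from the defining condition: if $p$ and $q$ are composable $\s|_{\F}$-deflations then $F(p)$ and $F(q)$ are epimorphisms, the composite $qp$ is again a deflation in $(\C,\E,\s)$, and $F(qp)=F(q)\circ F(p)$ is an epimorphism, so $qp$ is an $\s|_{\F}$-deflation.

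Finally, right exactness and maximality are quick. For any $\s|_{\F}$-triangle, half exactness of $F$ makes $F(X_0)\to\cdots\to F(X_{n+1})$ exact at the $n$ interior terms, while membership in $\F$ makes $F(d_n)$ epi; hence $F(X_0)\to\cdots\to F(X_{n+1})\to 0$ is exact and $F$ is right exact on $(\C,\F,\s|_{\F})$. Conversely, if $\F'$ is any closed subfunctor on which $F$ is right exact, then each $\delta'\in\F'$ forces $F(d_n)$ epi, so $\F'\subseteq\F$; combined with the preceding paragraph this exhibits $\F$ as the unique maximal closed subfunctor with the stated property. The one real obstacle is the pullback step: the morphism of half exact sequences by itself does not determine $F(e_n)$, and it is precisely the middle exactness furnished by the homotopy cartesian square that drives the chase.
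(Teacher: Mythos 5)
Your construction and overall line of argument are the same as the paper's: you define $\F=\E^{F}_{R}$ by requiring $F$ to send the terminal map of a realization to an epimorphism, dispose of the pushout case with a comparison morphism, treat the pullback case through the good lift supplied by (EA2), obtain additivity from the Baer sum, verify closedness via the deflation-composition criterion of \cite[Proposition 3.16]{HLN} together with (EA1), and read off right exactness and maximality directly from the definition. The only substantive deviation is how you finish the pullback step, and that is also where your write-up has its one genuine flaw.

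For $n>1$ the sequence you assert to be exact in the middle, $F(Y_n)\to F(X_n)\oplus F(C')\to F(C)$, is not what one gets: the homotopy cartesian datum attached to the good lift $(\id,f_1,\dots,f_n,c)$ is the whole ladder in degrees $1,\dots,n+1$, not the terminal square alone, and the associated distinguished $n$-exangle is its total complex (the mapping cone)
$$Y_1\lra Y_2\oplus X_1\lra\cdots\lra Y_n\oplus X_{n-1}\xrightarrow{\left(\begin{smallmatrix}-e_n&0\\ f_n&d_{n-1}\end{smallmatrix}\right)} C'\oplus X_n\xrightarrow{\left(\begin{smallmatrix}c& d_n\end{smallmatrix}\right)}C,$$
so the exactness that $F$ actually provides at $F(C')\oplus F(X_n)$ has source $F(Y_n)\oplus F(X_{n-1})$; your three-term sequence with source $F(Y_n)$ alone need not be exact there. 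Fortunately your chase survives this correction verbatim, because the extra summand $F(X_{n-1})$ maps only into $F(X_n)$ (the matrix entry landing in $F(C')$ is zero): a kernel element $(u,v)$ still lifts to a pair $(w_1,w_2)$ with $u=-F(e_n)w_1$, so $u\in\Im F(e_n)$. To make this legitimate in an arbitrary abelian category, run it element-free: the kernel of $(F(c),F(d_n))\colon F(C')\oplus F(X_n)\to F(C)$ is the pullback of $F(c)$ along $-F(d_n)$, pullbacks of epimorphisms are epimorphisms, and middle exactness says the matrix map surjects onto this kernel; composing, the row $(-F(e_n),0)$ is epic, hence so is $F(e_n)$. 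With this repair your finish is in fact slightly leaner than the paper's: the paper first invokes the already-proved pushout case to see that the mapping cone's extension $(e_0)_{*}\delta$ lies in $\E^{F}_{R}$, so that the $F$-image of the cone ends in an epimorphism, and then uses the universal property of the cokernel $F(C)$, whereas you need only middle exactness and epimorphy of $F(d_n)$. Two cosmetic points: for $n\ge2$ two realizations of the same extension are homotopy equivalent rather than isomorphic, but your well-definedness argument is unaffected, since a homotopy equivalence still satisfies $F(d^{X'}_n)\circ F(f_n)=F(d^{X}_n)$ with identities at the ends; and your Baer-sum step tacitly uses that direct sums of distinguished $n$-exangles are distinguished, which is \cite[Proposition 3.2]{HLN}, the same citation the paper makes.
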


This article is organized as follows. In Section 2, we review some elementary definitions and facts on $n$-exangulated categories. In Section 3, we prove our first main result. In Section 4, we prove our second main result.

\section{Preliminaries}
In this section, we briefly review basic concepts and results concerning $n$-exangulated categories.

 Let $\C$ be an additive category and
 $\mathbb{E}\colon \C^{\rm op}\times \C \rightarrow \Ab~~\mbox{($\Ab$ is the category of abelian groups)}$
 an additive bifunctor.
 {For any pair of objects $A,C\in\C$, an element $\del\in\E(C,A)$ is called an {\it $\E$-extension} or simply an {\it extension}. We also write such $\del$ as ${}_A\del_C$ when we indicate $A$ and $C$. The zero element ${}_A0_C=0\in\E(C,A)$ is called the {\it split $\E$-extension}. For any pair of $\E$-extensions ${}_A\del_C$ and ${}_{A'}\del{'}_{C'}$, let $\delta\oplus \delta'\in\mathbb{E}(C\oplus C', A\oplus A')$ be the
element corresponding to $(\delta,0,0,{\delta}{'})$ through the natural isomorphism $\mathbb{E}(C\oplus C', A\oplus A')\simeq\mathbb{E}(C, A)\oplus\mathbb{E}(C, A')
\oplus\mathbb{E}(C', A)\oplus\mathbb{E}(C', A')$.

For any $a\in\C(A,A')$ and $c\in\C(C',C)$,  $\E(C,a)(\del)\in\E(C,A')\ \ \text{and}\ \ \E(c,A)(\del)\in\E(C',A)$ are simply denoted by $a_{\ast}\del$ and $c^{\ast}\del$, respectively.

Let ${}_A\del_C$ and ${}_{A'}\del{'}_{C'}$ be any pair of $\E$-extensions. A {\it morphism} $(a,c)\colon\del\to{\delta}{'}$ of extensions is a pair of morphisms $a\in\C(A,A')$ and $c\in\C(C,C')$ in $\C$, satisfying the equality
$a_{\ast}\del=c^{\ast}{\delta}{'}$.}

Let $\C$ be an additive category as before, and let $n$ be any positive integer.
\begin{definition}\cite[Definition 2.7]{HLN}
Let $\bf{C}_{\C}$ be the category of complexes in $\C$. As its full subcategory, define $\CC$ to be the category of complexes in $\C$ whose components are zero in the degrees outside of $\{0,1,\ldots,n+1\}$. Namely, an object in $\CC$ is a complex $X_{\bullet}=\{X_i,d^X_i\}$ of the form
\[ X_0\xrightarrow{d^X_0}X_1\xrightarrow{d^X_1}\cdots\xrightarrow{d^X_{n-1}}X_n\xrightarrow{d^X_n}X_{n+1}. \]
We write a morphism $f_{\bullet}\co X_{\bullet}\to Y_{\bullet}$ simply $f_{\bullet}=(f_0,f_1,\ldots,f_{n+1})$, only indicating the terms of degrees $0,\ldots,n+1$.
\end{definition}

\begin{definition}\cite[Definition 2.11]{HLN}
By Yoneda lemma, any extension $\del\in\E(C,A)$ induces natural transformations
\[ \del\ssh\colon\C(-,C)\Rightarrow\E(-,A)\ \ \text{and}\ \ \del\ush\colon\C(A,-)\Rightarrow\E(C,-). \]
For any $X\in\C$, these $(\del\ssh)_X$ and $\del\ush_X$ are given as follows.
\begin{enumerate}
\item[\rm(1)] $(\del\ssh)_X\colon\C(X,C)\to\E(X,A)\ :\ f\mapsto f\uas\del$.
\item[\rm (2)] $\del\ush_X\colon\C(A,X)\to\E(C,X)\ :\ g\mapsto g\sas\delta$.
\end{enumerate}
We simply denote $(\del\ssh)_X(f)$ and $\del\ush_X(g)$ by $\del\ssh(f)$ and $\del\ush(g)$, respectively.
\end{definition}

\begin{definition}\cite[Definition 2.9]{HLN}
 Let $\C,\E,n$ be as before. Define a category $\AE:=\AE^{n+2}_{(\C,\E)}$ as follows.
\begin{enumerate}
\item[\rm(1)] A object in $\AE^{n+2}_{(\C,\E)}$ is a pair $\Xd$ of $X_{\bullet}\in\CC$
and $\del\in\E(X_{n+1},X_0)$ satisfying
$$(d_0^X)_{\ast}\del=0~~\textrm{and}~~(d^X_n)^{\ast}\del=0.$$
We call such a pair an $\E$-attached
complex of length $n+2$. We also denote it by
$$X_0\xrightarrow{d_0^X}X_1\xrightarrow{d_1^X}\cdots\xrightarrow{d_{n-2}^X}X_{n-1}
\xrightarrow{d_{n-1}^X}X_n\xrightarrow{d_n^X}X_{n+1}\overset{\delta}{\dashrightarrow}.$$
\item[\rm (2)]  For such pairs $\Xd$ and $\langle Y_{\bullet},\rho\rangle$, a morphism $f_{\bullet}\colon\Xd\to\langle Y_{\bullet},\rho\rangle$ is
defined to be a morphism $f_{\bullet}\in\CC(X_{\bullet},Y_{\bullet})$ satisfying $(f_0)_{\ast}\del=(f_{n+1})^{\ast}\rho$.

We use the same composition and the identities as in $\CC$.

\end{enumerate}
\end{definition}

\begin{definition}\cite[Definition 2.13]{HLN}\label{def1}
 An {\it $n$-exangle} is a pair $\Xd$ of $X_{\bullet}\in\CC$
and $\del\in\E(X_{n+1},X_0)$ which satisfies the following conditions.
\begin{enumerate}
\item[\rm (1)] The following sequence of functors $\C\op\to\Ab$ is exact.
$$
\C(-,X_0)\xrightarrow{\C(-,\ d^X_0)}\cdots\xrightarrow{\C(-,\ d^X_n)}\C(-,X_{n+1})\xrightarrow{~\del\ssh~}\E(-,X_0)
$$
\item[\rm (2)] The following sequence of functors $\C\to\Ab$ is exact.
$$
\C(X_{n+1},-)\xrightarrow{\C(d^X_n,\ -)}\cdots\xrightarrow{\C(d^X_0,\ -)}\C(X_0,-)\xrightarrow{~\del\ush~}\E(X_{n+1},-)
$$
\end{enumerate}
In particular any $n$-exangle is an object in $\AE$.
A {\it morphism of $n$-exangles} simply means a morphism in $\AE$. Thus $n$-exangles form a full subcategory of $\AE$.
\end{definition}

\begin{definition}\cite[Definition 2.22]{HLN}
Let $\s$ be a correspondence which associates a homotopic equivalence class $\s(\del)=[{}_A{X_{\bullet}}_C]$ to each extension $\del={}_A\del_C$. Such $\s$ is called a {\it realization} of $\E$ if it satisfies the following condition for any $\s(\del)=[X_{\bullet}]$ and any $\s(\rho)=[Y_{\bullet}]$.
\begin{itemize}
\item[{\rm (R0)}] For any morphism of extensions $(a,c)\co\del\to\rho$, there exists a morphism $f_{\bullet}\in\CC(X_{\bullet},Y_{\bullet})$ of the form $f_{\bullet}=(a,f_1,\ldots,f_n,c)$. Such $f_{\bullet}$ is called a {\it lift} of $(a,c)$.
\end{itemize}
In such a case, we simply say that \lq\lq$X_{\bullet}$ realizes $\del$" whenever they satisfy $\s(\del)=[X_{\bullet}]$.

Moreover, a realization $\s$ of $\E$ is said to be {\it exact} if it satisfies the following conditions.
\begin{itemize}
\item[{\rm (R1)}] For any $\s(\del)=[X_{\bullet}]$, the pair $\Xd$ is an $n$-exangle.
\item[{\rm (R2)}] For any $A\in\C$, the zero element ${}_A0_0=0\in\E(0,A)$ satisfies
\[ \s({}_A0_0)=[A\ov{\id_A}{\lra}A\to0\to\cdots\to0\to0]. \]
Dually, $\s({}_00_A)=[0\to0\to\cdots\to0\to A\ov{\id_A}{\lra}A]$ holds for any $A\in\C$.
\end{itemize}
Note that the above condition {\rm (R1)} does not depend on representatives of the class $[X_{\bullet}]$.
\end{definition}

\begin{definition}\cite[Definition 2.23]{HLN}
Let $\s$ be an exact realization of $\E$.
\begin{enumerate}
\item[\rm (1)] An $n$-exangle $\Xd$ is called an $\s$-{\it distinguished} $n$-exangle if it satisfies $\s(\del)=[X_{\bullet}]$. We often simply say {\it distinguished $n$-exangle} when $\s$ is clear from the context.
\item[\rm (2)]  An object $X_{\bullet}\in\CC$ is called an {\it $\s$-conflation} or simply a {\it conflation} if it realizes some extension $\del\in\E(X_{n+1},X_0)$.
\item[\rm (3)]  A morphism $f$ in $\C$ is called an {\it $\s$-inflation} or simply an {\it inflation} if it admits some conflation $X_{\bullet}\in\CC$ satisfying $d_0^X=f$.
\item[\rm (4)]  A morphism $g$ in $\C$ is called an {\it $\s$-deflation} or simply a {\it deflation} if it admits some conflation $X_{\bullet}\in\CC$ satisfying $d_n^X=g$.
\end{enumerate}
\end{definition}

\begin{definition}\cite[Definition 2.27]{HLN}
For a morphism $f_{\bullet}\in\CC(X_{\bullet},Y_{\bullet})$ satisfying $f_0=\id_A$ for some $A=X_0=Y_0$, its {\it mapping cone} $M_{_{\bullet}}^f\in\CC$ is defined to be the complex
\[ X_1\xrightarrow{d^{M_f}_0}X_2\oplus Y_1\xrightarrow{d^{M_f}_1}X_3\oplus Y_2\xrightarrow{d^{M_f}_2}\cdots\xrightarrow{d^{M_f}_{n-1}}X_{n+1}\oplus Y_n\xrightarrow{d^{M_f}_n}Y_{n+1} \]
where $d^{M_f}_0=\begin{bmatrix}-d^X_1\\ f_1\end{bmatrix},$
$d^{M_f}_i=\begin{bmatrix}-d^X_{i+1}&0\\ f_{i+1}&d^Y_i\end{bmatrix}\ (1\le i\le n-1),$
$d^{M_f}_n=\begin{bmatrix}f_{n+1}&d^Y_n\end{bmatrix}$.

{\it The mapping cocone} is defined dually, for morphisms $h_{\bullet}$ in $\CC$ satisfying $h_{n+1}=\id$.
\end{definition}

\begin{definition}\cite[Definition 2.32]{HLN}
An {\it pre-$n$-exangulated category} is a triplet $(\C,\E,\s)$ of additive category $\C$, additive bifunctor $\E\co\C\op\times\C\to\Ab$, and its exact realization $\s$, satisfying the following condition.
\begin{itemize}[leftmargin=3.3em]
\item[{\rm (EA1)}] Let $A\ov{f}{\lra}B\ov{g}{\lra}C$ be any sequence of morphisms in $\C$. If both $f$ and $g$ are inflations, then so is $g\circ f$. Dually, if $f$ and $g$ are deflations, then so is $g\circ f$.
\end{itemize}

If the triplet $(\C,\E,\s)$  moreover satisfies the following conditions, then it is called an {\it $n$-exangulated category}
\begin{itemize}[leftmargin=3.3em]
\item[{\rm (EA2)}] For $\rho\in\E(D,A)$ and $c\in\C(C,D)$, let ${}_A\langle X_{\bullet},c\uas\rho\rangle_C$ and ${}_A\langle Y_{\bullet},\rho\rangle_D$ be distinguished $n$-exangles. Then $(\id_A,c)$ has a {\it good lift} $f_{\bullet}$, in the sense that its mapping cone gives a distinguished $n$-exangle $\langle M^f_{_{\bullet}},(d^X_0)\sas\rho\rangle$.
\end{itemize}   
\begin{itemize}[leftmargin=4.2em]
 \item[{\rm (EA2$\op$)}] Dual of {\rm (EA2)}.
\end{itemize}
 Note that in the case $n=1$, a triplet $(\C,\E,\s)$ is a  $1$-exangulated category if and only if it is an extriangulated category, see \cite[Proposition 4.3]{HLN}.
\end{definition}

\begin{example}
From \cite[Proposition 4.34]{HLN} and \cite[Proposition 4.5]{HLN},  we know that $n$-exact categories and $(n+2)$-angulated categories are $n$-exangulated categories.
There are some other examples of $n$-exangulated categories
 which are neither $n$-exact nor $(n+2)$-angulated, see \cite{HLN,HLN1,LZ,HZZ2}.
\end{example}

\section{An equivalent characterization of the axiom (EA2)}
In this section we introduce the definition of homotopy cartesian squares in pre-$n$-exangulated categories and provide an equivalent statement of the axiom (EA2).

\begin{definition}The following commutative diagram
$$\xymatrix{
 X_0 \ar[r]^{f_0}\ar[d]^{\varphi_0} & X_1 \ar[r]^{f_1}\ar[d]^{\varphi_1} & \cdots \ar[r]^{f_{n-2}}& X_{n-1}\ar[r]^{f_{n-1}}\ar[d]^{\varphi_{n-1}}& X_{n} \ar[d]^{\varphi_{n}} \\
 Y_0 \ar[r]^{g_0} & Y_1 \ar[r]^{g_1} & \cdots\ar[r]^{g_{n-2}}& Y_{n-1} \ar[r]^{g_{n-1}} & Y_{n} \\
}$$ in a pre-$n$-exangulated category $\C$ is called a $homotopy\ cartesian\ diagram$ if the following sequence
$$\xymatrixcolsep{4.5pc}\xymatrix{
X_0\ar[r]^{\left(
             \begin{smallmatrix}
               -f_0 \\
               \varphi_0 \\
             \end{smallmatrix}
           \right)
}& X_1\oplus Y_0\ar[r]^{\left(
             \begin{smallmatrix}
              -f_1 & 0 \\
               \varphi_1 & g_0 \\
             \end{smallmatrix}
           \right)}& X_2\oplus Y_1\ar[r]^{\left(
             \begin{smallmatrix}
              -f_2 & 0 \\
               \varphi_2 & g_1 \\
             \end{smallmatrix}
           \right)}& \cdots\\
}$$
$$\begin{gathered}\xymatrixcolsep{5pc}\xymatrix{
\cdots\ar[r]^{\left(
             \begin{smallmatrix}
              -f_{n-1} & 0 \\
               \varphi_{n-1} & g_{n-2} \\
             \end{smallmatrix}
           \right)\ \ \ \ }& X_{n}\oplus Y_{n-1}\ar[r]^{\ \ \ \left(
             \begin{smallmatrix}
               \varphi_{n} & g_{n-1} \\
             \end{smallmatrix}
           \right)}& Y_{n} \overset{\partial}{\dashrightarrow}
}\end{gathered}\eqno{(2.1)}$$
is a distinguished $n$-exangle, where $\partial$ is called a $differential$.
\end{definition}

\begin{remark}
The distinguished $n$-exangle (2.1) in the definition of homotopy cartesian diagram can also be written as follows
$$\xymatrixcolsep{4.5pc}\xymatrix{
X_0\ar[r]^{\left(
             \begin{smallmatrix}
               -f_0 \\
               \varphi_0 \\
             \end{smallmatrix}
           \right)
}& X_1\oplus Y_0\ar[r]^{\left(
             \begin{smallmatrix}
              f_1 & 0 \\
               \varphi_1 & g_0 \\
             \end{smallmatrix}
           \right)}& X_2\oplus Y_1\ar[r]^{\left(
             \begin{smallmatrix}
              f_2 & 0 \\
               -\varphi_2 & g_1 \\
             \end{smallmatrix}
           \right)}& \cdots\\
}$$
$$\begin{gathered}\xymatrixcolsep{5pc}\xymatrix{
\cdots\ar[r]^{\left(
             \begin{smallmatrix}
              f_{n-1} & 0 \\
               (-1)^n \varphi_{n-1} & g_{n-2} \\
             \end{smallmatrix}
           \right)\ \ \ \ }& X_{n}\oplus Y_{n-1}\ar[r]^{\ \ \ \left(
             \begin{smallmatrix}
              (-1)^{n+1} \varphi_{n} & g_{n-1} \\
             \end{smallmatrix}
           \right)}& Y_{n} \overset{\partial}{\dashrightarrow}.
}\end{gathered}\eqno{(2.2)}$$
In fact, the two distinguished $n$-exangles are isomorphic:
$$\xymatrixcolsep{4.5pc}\xymatrixrowsep{2.7pc}\xymatrix{
X_0\ar[r]^{\left(
             \begin{smallmatrix}
               -f_0 \\
               \varphi_0 \\
             \end{smallmatrix}
           \right)
}\ar@{=}[d]& X_1\oplus Y_0\ar[r]^{\left(
             \begin{smallmatrix}
              -f_1 & 0 \\
               \varphi_1 & g_0 \\
             \end{smallmatrix}
           \right)}\ar@{=}[d]& X_2\oplus Y_1\ar[r]^{\left(
             \begin{smallmatrix}
              -f_2 & 0 \\
               \varphi_2 & g_1 \\
             \end{smallmatrix}
           \right)}\ar[d]^{\left(
             \begin{smallmatrix}
              -1 & 0 \\
               0 & 1 \\
             \end{smallmatrix}
           \right)}& \cdots\\
X_0\ar[r]^{\left(
             \begin{smallmatrix}
               -f_0 \\
               \varphi_0 \\
             \end{smallmatrix}
           \right)
}& X_1\oplus Y_0\ar[r]^{\left(
             \begin{smallmatrix}
              f_1 & 0 \\
               \varphi_1 & g_0 \\
             \end{smallmatrix}
           \right)}& X_2\oplus Y_1\ar[r]^{\left(
             \begin{smallmatrix}
              f_2 & 0 \\
               -\varphi_2 & g_1 \\
             \end{smallmatrix}
           \right)}& \cdots\\
}$$
$$\xymatrixcolsep{5.5pc}\xymatrixrowsep{2.5pc}\xymatrix{
\cdots\ar[r]^{\left(
             \begin{smallmatrix}
              -f_{n-1} & 0 \\
               \varphi_{n-1} & g_{n-2} \\
             \end{smallmatrix}
           \right)}& X_{n}\oplus Y_{n-1}\ar[r]^{\left(
             \begin{smallmatrix}
               \varphi_{n} & g_{n-1} \\
             \end{smallmatrix}
           \right)}\ar[d]^{\left(
             \begin{smallmatrix}
              (-1)^{n+1} & 0 \\
              0 & 1 \\
             \end{smallmatrix}
           \right)} & Y_{n}\ar@{-->}[r]^-{\partial} \ar@{=}[d] & \\
\cdots\ar[r]^{\left(
             \begin{smallmatrix}
              f_{n-1} & 0 \\
               (-1)^n\varphi_{n-1} & g_{n-2} \\
             \end{smallmatrix}
           \right)}& X_{n}\oplus Y_{n-1}\ar[r]^{\left(
             \begin{smallmatrix}
               (-1)^{n+1}\varphi_{n} & g_{n-1} \\
             \end{smallmatrix}
           \right)}& Y_{n} \ar@{-->}[r]^\partial&.
}$$

\end{remark}

\begin{lemma}\emph{\cite[Proposition 3.6]{HLN}}\label{a2}
\rm Let $\C$ be an $n$-exangulated category, ${}_A\langle X_{\bullet},\delta\rangle_C$ and ${}_B\langle Y_{\bullet},\rho\rangle_D$ be distinguished $n$-exangles. Suppose that we are given a commutative square
$$\xymatrix{
 X_0 \ar[r]^{{d_0^X}} \ar@{}[dr]|{\circlearrowright} \ar[d]_{a} & X_1 \ar[d]^{b}\\
 Y_0  \ar[r]_{d_0^Y} &Y_1
}
$$
in $\C$. Then the following holds.

{\rm (1)}~ There is a morphism $f_{\bullet}\colon\Xd\to\langle Y_{\bullet},\rho\rangle$ which satisfies $f_0=a$ and $f_1=b$.

{\rm (2)}~ If $X_0=Y_0=A$ and $a=1_A$ for some $A\in\C$, then the above $f_{\bullet}$ can be taken to give a distinguished $n$-exangle $\langle M^f_{\bullet},(d^X_0)\sas\rho\rangle$.

\end{lemma}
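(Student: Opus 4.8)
\medskip

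The plan is to treat the two parts in sequence: for (1) I would first lift the given square to a chain map $f_{\bullet}\in\CC(X_{\bullet},Y_{\bullet})$ and then verify the extension-compatibility that makes it a genuine morphism of $n$-exangles, while for (2) I would recognise the resulting situation as an instance of the axiom {\rm (EA2)} and transport the good-lift property back to $f_{\bullet}$. The guiding heuristic is that a commutative ladder between the underlying complexes of two $n$-exangles need not respect the realizing extensions, so at every stage the real work lies in correcting a lift without disturbing what has already been arranged.

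For the chain map in (1) I would argue by induction. Starting from $f_0=a$, $f_1=b$ and the given relation $f_1\circ d^X_0=d^Y_0\circ f_0$, suppose $f_0,\dots,f_i$ have been found with $f_j\circ d^X_{j-1}=d^Y_{j-1}\circ f_{j-1}$ for $j\le i$ and $1\le i\le n$. Then $d^Y_i\circ f_i\co X_i\to Y_{i+1}$ satisfies $(d^Y_i\circ f_i)\circ d^X_{i-1}=d^Y_i\circ d^Y_{i-1}\circ f_{i-1}=0$, so by the exactness of the sequence of Definition \ref{def1}(2) for $\Xd$ evaluated at $Y_{i+1}$ it lies in $\Im(-\circ d^X_i)$; any such factorization provides $f_{i+1}$ with $f_{i+1}\circ d^X_i=d^Y_i\circ f_i$. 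Iterating up to $i=n$ produces $f_{\bullet}$.

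The crux of (1) is to force $(f_0)\sas\del=(f_{n+1})\uas\rho$. I would set $\theta:=a\sas\del-(f_{n+1})\uas\rho\in\E(X_{n+1},Y_0)$ and show $\theta=0$. A direct computation using the attached-complex identities and the chain-map relations gives $(d^X_n)\uas\theta=0$ (from $(d^X_n)\uas\del=0$, $f_{n+1}\circ d^X_n=d^Y_n\circ f_n$ and $(d^Y_n)\uas\rho=0$) and, crucially using the given square $d^Y_0\circ a=b\circ d^X_0$, also $(d^Y_0)\sas\theta=0$ (from $(d^X_0)\sas\del=0$ and $(d^Y_0)\sas\rho=0$). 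By the long exact sequence attached to $\langle Y_{\bullet},\rho\rangle$, exactness of $\C(X_{n+1},Y_{n+1})\xrightarrow{\rho\ssh}\E(X_{n+1},Y_0)\xrightarrow{(d^Y_0)\sas}\E(X_{n+1},Y_1)$ turns $(d^Y_0)\sas\theta=0$ into $\theta=g\uas\rho$ for some $g\co X_{n+1}\to Y_{n+1}$, after which replacing $f_{n+1}$ by $f_{n+1}+g$ annihilates $\theta$. The main obstacle of (1) is that this correction must respect the chain map, i.e.\ $g\circ d^X_n=0$: the identity $(d^X_n)\uas\theta=0$ only forces $g\circ d^X_n\in\Im((d^Y_n)\sas)=\Ker(\rho\ssh)$, so to reach $g\circ d^X_n=0$ I would adjust $g$ within its coset modulo $\Ker(\rho\ssh)$ and, if necessary, modify the lower components $f_2,\dots,f_n$ in tandem; keeping this cascade of corrections consistent is the delicate bookkeeping.

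For (2), put $a=\id_A$. The compatibility just secured then reads $\del=(f_{n+1})\uas\rho=c\uas\rho$ with $c:=f_{n+1}$, so $\Xd$ realizes $c\uas\rho$ and the data $(\rho,c)$ fall exactly under the axiom {\rm (EA2)}, which supplies a good lift $g_{\bullet}$ of $(\id_A,c)$ whose mapping cone $\langle M^{g}_{\bullet},(d^X_0)\sas\rho\rangle$ is a distinguished $n$-exangle. Since $f_{\bullet}$ and $g_{\bullet}$ are both lifts of the morphism of extensions $(\id_A,c)$, they are homotopic; as homotopic morphisms with equal first component $\id_A$ have isomorphic mapping cones, and being a distinguished $n$-exangle is invariant under isomorphism, the mapping cone of $f_{\bullet}$ is distinguished as well. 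Hence the lift $f_{\bullet}$ with $f_0=\id_A$ and $f_1=b$ is itself good, which is the assertion. I expect the genuine difficulty to be concentrated in the correction step of (1) and, for (2), in justifying that homotopic lifts share the distinguishedness of their mapping cones.
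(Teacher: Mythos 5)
Your proposal addresses a statement that the paper itself does not prove but imports wholesale from \cite[Proposition 3.6]{HLN}, so the comparison below is with the argument given there.

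For part (1), your Step A (building the chain map $f_{\bullet}$ inductively) and Step B (showing $\theta:=a_{\ast}\delta-(f_{n+1})^{\ast}\rho$ satisfies $(d_0^Y)_{\ast}\theta=0$ and $(d_n^X)^{\ast}\theta=0$, then writing $\theta=g^{\ast}\rho$ via the exactness of $\C(X_{n+1},Y_{n+1})\xrightarrow{\rho_{\sharp}}\E(X_{n+1},Y_0)\xrightarrow{(d_0^Y)_{\ast}}\E(X_{n+1},Y_1)$, which is \cite[Proposition 3.5]{HLN} and legitimately available) are correct. But the proof collapses exactly at the step you defer as ``delicate bookkeeping'', and that step is not bookkeeping --- it is the whole content of the lemma. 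From $(d_n^X)^{\ast}\theta=0$ you only get $g\circ d_n^X=d_n^Y\circ s$ for some $s\colon X_n\to Y_n$, so replacing $f_{n+1}$ by $f_{n+1}+g$ forces a correction of $f_n$ by $s$; then $s\circ d_{n-1}^X$ is again only null-homotopic rather than zero, forcing a correction of $f_{n-1}$, and so on. This cascade propagates all the way down and ends by demanding corrections of $f_1$ and $f_0$, which are frozen at $b$ and $a$; nothing in your plan shows the corrections can be chosen to die out before reaching degree $1$, and arranging this is essentially equivalent to the statement being proved. The proof in \cite{HLN} avoids the problem by reversing the order of construction: since $(d_0^Y)_{\ast}(a_{\ast}\delta)=(b\circ d_0^X)_{\ast}\delta=b_{\ast}(d_0^X)_{\ast}\delta=0$ uses only the given commutative square, one first obtains the \emph{end} component $c$ with $c^{\ast}\rho=a_{\ast}\delta$ from that same exact sequence, then lifts the morphism of extensions $(a,c)$ by {\rm (R0)} to some $g_{\bullet}$, and finally repairs the degree-$1$ component: $(g_1-b)\circ d_0^X=0$ gives $g_1-b=h\circ d_1^X$ with $h\colon X_2\to Y_1$, and $(a,\,b,\,g_2-d_1^Yh,\,g_3,\dots,g_n,\,c)$ is the desired morphism. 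The moral is that corrections in middle degrees are absorbed by a homotopy, while a correction at degree $n+1$, where you place yours, is not.

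Part (2) rests on a claim that is false: homotopic lifts with first component $\id_A$ need not have isomorphic (nor even homotopy equivalent) mapping cones. The candidate isomorphism built from a homotopy $(h_i)$, with components $\left(\begin{smallmatrix}1&0\\ \pm h_{i+1}&1\end{smallmatrix}\right)$, commutes with the last differentials only if $d_n^Y\circ h_{n+1}=0$, and the homotopy one can actually construct between two lifts (via exactness of the sequence attached to $X_{\bullet}$) gives no control whatsoever over $d_n^Y\circ h_{n+1}$. This gap is not repairable: if your claim were true, \emph{every} lift of $(\id_A,c)$ would be a good lift, whereas already in triangulated categories (Neeman's analysis of good morphisms of triangles) there exist morphisms of triangles with identity first component whose mapping cone is not distinguished --- this is precisely why {\rm (EA2)} postulates the \emph{existence} of a good lift rather than asserting goodness of all lifts. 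The correct argument runs in the opposite direction from yours: start from the good lift $g_{\bullet}$ of $(\id_A,c)$ supplied by {\rm (EA2)}, and modify \emph{it} by the explicit elementary correction above ($f_1=g_1-hd_1^X=b$, $f_2=g_2-d_1^Yh$, $f_i=g_i$ otherwise); for this special correction, concentrated in degrees $1$ and $2$, the cone isomorphism $\bigl(\id_{X_1},\left(\begin{smallmatrix}1&0\\ -h&1\end{smallmatrix}\right),\id,\dots,\id\bigr)$ does exist, so the modified lift is still good and has the required components $f_0=\id_A$, $f_1=b$.
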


\begin{theorem}\label{thm1}
Let $(\C,\E,\s)$ be a pre-$n$-exangulated category. Then $\C$ satisfies {\rm(EA2)} if and only if $\C$ satisfies {\rm(EA2-1)}:

Let $X_0\xrightarrow{f_0}X_1\xrightarrow{f_1}\cdots\xrightarrow{f_{n-2}}X_{n-1}
\xrightarrow{f_{n-1}}X_n\xrightarrow{f_n}X_{n+1}\overset{\delta}{\dashrightarrow}$ and $X_0\xrightarrow{g_0}Y_1\xrightarrow{g_1}\cdots\xrightarrow{g_{n-2}}Y_{n-1}
\xrightarrow{g_{n-1}}Y_n\xrightarrow{g_n}Y_{n+1}\overset{\delta^{'}}{\dashrightarrow}$ be distinguished $n$-exangles, and $\varphi_1:X_1\rightarrow Y_1$ be a morphism such that $\varphi_1f_0=g_0$.
Then there exist morphisms $\varphi_i:X_i\rightarrow Y_i$ for $2\leq i\leq n+1$, which give a morphism of distinguished $n$-exangles

$$\xymatrix{
X_0\ar[r]^{f_0}\ar@{}[dr] \ar@{=}[d] &X_1 \ar[r]^{f_1} \ar@{}[dr]\ar[d]^{\varphi_1}&X_2 \ar[r]^{f_2} \ar@{}[dr]\ar@{-->}[d]^{\varphi_2}&\cdot\cdot\cdot \ar[r]\ar@{}[dr] &X_n \ar[r]^{f_n} \ar@{}[dr]\ar@{-->}[d]^{\varphi_n}&X_{n+1} \ar@{}[dr]\ar@{-->}[d]^{\varphi_{n+1}} \ar@{-->}[r]^-{\delta} &\\
{X_0}\ar[r]^{g_0} &{Y_1}\ar[r]^{g_1}&{Y_2} \ar[r]^{g_2} &\cdot\cdot\cdot \ar[r] &{Y _n}\ar[r]^{g_n}  &{Y_{n+1}} \ar@{-->}[r]^-{\delta^{'}} &}
$$ and moreover, the following
$$\xymatrix{
 X_1 \ar[r]^{f_1}\ar[d]^{\varphi_1} & X_2 \ar[r]^{f_2}\ar[d]^{\varphi_2} & \cdots \ar[r]^{f_{n-1}}& X_{n}\ar[r]^{f_{n}}\ar[d]^{\varphi_{n}}& X_{n+1} \ar[d]^{\varphi_{n+1}} \\
 Y_1 \ar[r]^{g_1} & Y_2 \ar[r]^{g_2} & \cdots\ar[r]^{g_{n-1}}& Y_{n} \ar[r]^{g_{n}} & Y_{n+1} \\
}$$ is a homotopy cartesian diagram and $(f_0)_{*}{\delta^{'}}$ is the differential.

\end{theorem}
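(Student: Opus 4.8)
The plan is to read off both implications from a single observation: for a chain map $\varphi_\bullet\co\langle X_\bullet,\delta\rangle\to\langle Y_\bullet,\delta'\rangle$ with $\varphi_0=\id_{X_0}$, the mapping cone $M^\varphi_\bullet$ is, term by term and differential by differential, exactly the complex appearing in the homotopy cartesian sequence $(2.1)$ attached to the lower square (after the harmless reindexing $X_0,\dots,X_n\rightsquigarrow X_1,\dots,X_{n+1}$ and $Y_0,\dots,Y_n\rightsquigarrow Y_1,\dots,Y_{n+1}$), and the extension $(d^X_0)\sas\delta'=(f_0)\sas\delta'$ assigned to $M^\varphi_\bullet$ is precisely the differential $\partial$ demanded in $(2.1)$. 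Thus ``the lower square is a homotopy cartesian diagram with differential $(f_0)\sas\delta'$'' and ``$\langle M^\varphi_\bullet,(f_0)\sas\delta'\rangle$ is a distinguished $n$-exangle'' are literally the same assertion; the sign bookkeeping of the Remark is all that is needed to match the two conventions. Consequently (EA2-1) is nothing but the statement that the lift extending the square $\varphi_1 f_0=g_0$ can be chosen with distinguished mapping cone, rephrased in the homotopy-cartesian language.

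For (EA2) $\Rightarrow$ (EA2-1) I would invoke Lemma \ref{a2}: the commuting square $\varphi_1 f_0=g_0\,(=g_0\id_{X_0})$ is exactly the square to which that lemma applies with $a=\id_{X_0}$, $b=\varphi_1$. Part (1) extends it, by the inductive factorization through the $f_i$ afforded by the exactness of $\C(X_{n+1},-)\to\cdots\to\C(X_0,-)$ in Definition \ref{def1}(2), to a morphism of $n$-exangles with $\varphi_0=\id_{X_0}$; part (2) then lets this lift be chosen so that $\langle M^\varphi_\bullet,(f_0)\sas\delta'\rangle$ is distinguished. A glance at the proof of Lemma \ref{a2} shows it rests only on (EA1), (R0)--(R2) and (EA2), never on (EA2$\op$), so it is available here as soon as (EA2) is assumed. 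Reading the distinguished mapping cone back as a homotopy cartesian square via the observation above yields both the morphism of distinguished $n$-exangles and the homotopy cartesian diagram with differential $(f_0)\sas\delta'$, which is exactly (EA2-1).

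For the converse (EA2-1) $\Rightarrow$ (EA2), start from $\rho\in\E(D,A)$, $c\in\C(C,D)$ and the two distinguished $n$-exangles $\langle X_\bullet,c\uas\rho\rangle$ and $\langle Y_\bullet,\rho\rangle$. By (R0) the morphism of extensions $(\id_A,c)\co c\uas\rho\to\rho$ admits a lift $f_\bullet=(\id_A,f_1,\dots,f_n,c)$; in particular $f_1 d^X_0=d^Y_0$, so I may feed $\varphi_1:=f_1$ into (EA2-1). This produces a morphism of distinguished $n$-exangles $\varphi_\bullet$ whose lower square is homotopy cartesian with differential $(d^X_0)\sas\rho$, i.e.\ (by the observation) with $\langle M^\varphi_\bullet,(d^X_0)\sas\rho\rangle$ distinguished. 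The only discrepancy with a genuine good lift of $(\id_A,c)$ is the top component: (EA2-1) guarantees only $\varphi_{n+1}\uas\rho=c\uas\rho$, not $\varphi_{n+1}=c$.

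The main obstacle is precisely closing this gap, and I would do it by a homotopy correction. From $(\,c-\varphi_{n+1}\,)\uas\rho=0$ and the exactness of $\C(-,Y_{n+1})\xrightarrow{\rho\ssh}\E(-,Y_0)$ at $\C(C,Y_{n+1})$ (Definition \ref{def1}(1) for $Y_\bullet$) I obtain $h\co C\to Y_n$ with $c-\varphi_{n+1}=d^Y_n h$. Setting $h_{n+1}:=h$ and all other $h_i:=0$ defines a chain map $\widetilde\varphi_\bullet$ homotopic to $\varphi_\bullet$, with $\widetilde\varphi_0=\id_A$ and $\widetilde\varphi_{n+1}=c$, differing from $\varphi_\bullet$ only in degrees $n,n+1$; it is again a morphism of $n$-exangles since the $\E$-compatibility depends only on its end terms. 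Because homotopic chain maps with identity $0$-th component have isomorphic mapping cones (the isomorphism being the triangular matrix built from $h$, an isomorphism of $\E$-attached complexes carrying $(d^X_0)\sas\rho$ to itself), $\langle M^{\widetilde\varphi}_\bullet,(d^X_0)\sas\rho\rangle$ is distinguished, so $\widetilde\varphi_\bullet$ is a good lift of $(\id_A,c)$ and (EA2) holds. The two delicate points to verify carefully are this homotopy-invariance of the mapping cone and the sign matching in the translation; everything else is the routine lifting and exactness bookkeeping recalled above.
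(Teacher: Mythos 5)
Your proposal is correct, and its skeleton is the paper's: the forward implication is delegated to Lemma \ref{a2} (together with the observation, also implicit in the paper, that for a lift with $\varphi_0=\id_{X_0}$ the mapping cone of Definition 2.27 \emph{is} the complex $(2.1)$ and the good-lift condition \emph{is} the homotopy cartesian condition with differential $(f_0)_*\delta'$), while the reverse implication runs (R0)-lift $\rightarrow$ (EA2-1) $\rightarrow$ correction of the wrong top component $\rightarrow$ invariance of the distinguished mapping cone under that correction. The genuine difference is in the correction step, and yours is a real streamlining. The paper constructs, by a descending induction through the exact sequences of $\langle Y_\bullet,\delta'\rangle$, a whole family of homotopies $h_{n+1},h_n,\dots,h_2$ (plus an $h_1$ it never uses) relating its (R0)-lift $\varphi'_\bullet$ to the (EA2-1)-output $\varphi''_\bullet$, and then modifies \emph{every} component before exhibiting the triangular-matrix isomorphism of cones. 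You instead note that only the last component needs repair, extract a single $h\colon C\to Y_n$ with $g_nh=c-\varphi_{n+1}$ from one exactness argument, and perturb by the chain homotopy concentrated in degree $n+1$; the mapping cone then changes by the isomorphism which is the identity everywhere except $\left(\begin{smallmatrix}1&0\\ -h&1\end{smallmatrix}\right)$ on $X_{n+1}\oplus Y_n$, which fixes both end terms and hence carries one realization of $(f_0)_*\delta'$ to the other. I checked this computation: it is exactly the degenerate case of the paper's big commutative diagram in which $h_i=0$ for all $i\le n$, so distinguishedness transfers and your $\widetilde\varphi_\bullet$ is a good lift of $(\id_{X_0},c)$. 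In short, one homotopy and one exactness argument where the paper spends $n$ of them; the paper's longer route proves the (unneeded) extra fact that its two lifts are chain homotopic.

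One shared caveat worth recording: both you and the paper invoke Lemma \ref{a2}, which is stated for $n$-exangulated categories, inside a pre-$n$-exangulated category satisfying only (EA2). You at least flag that this requires the proof of that lemma to avoid (EA2$\op$), which the paper passes over in silence; do note, however, that your parenthetical account of how part (1) of the lemma is proved (pure inductive factorization through the exact $\C$-sequences) is too optimistic --- producing $\varphi_{n+1}$ compatible with the extensions needs the extended exact sequence of \cite[Prop.~3.5]{HLN}, not just Definition \ref{def1} --- so the claim about which axioms the lemma consumes still rests on inspecting \cite{HLN}, exactly as the paper's citation does.
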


\begin{proof}
{\rm(EA2)}$ \Longrightarrow$ {\rm(EA2-1)}. It follows from Lemma \ref {a2}.

{\rm(EA2-1)}$ \Longrightarrow$ {\rm(EA2)}. Since $(\id,\varphi_{n+1}):(\varphi_{n+1})^{*}{\delta^{'}}\rightarrow\delta^{'}$ is a morphism of $\E$-extensions, by ${\rm (R0)}$, there exist morphisms $\varphi^{'}_i:X_i\rightarrow Y_i$ for $1\leq i\leq n$ such that the following diagram commutative
$$\xymatrix{
X_0\ar[r]^{f_0}\ar@{}[dr] \ar@{=}[d] &X_1 \ar[r]^{f_1} \ar@{}[dr]\ar@{-->}[d]^{\varphi^{'}_1}&X_2 \ar[r]^{f_2} \ar@{}[dr]\ar@{-->}[d]^{\varphi^{'}_2}&\cdot\cdot\cdot \ar[r]\ar@{}[dr] &X_n \ar[r]^{f_n} \ar@{}[dr]\ar@{-->}[d]^{\varphi^{'}_n}&X_{n+1} \ar@{}[dr]\ar[d]^{\varphi_{n+1}} \ar@{-->}[r]^-{(\varphi_{n+1})^{*}{\delta^{'}}} &\\
{X_0}\ar[r]^{g_0} &{Y_1}\ar[r]^{g_1}&{Y_2} \ar[r]^{g_2} &\cdot\cdot\cdot \ar[r] &{Y _n}\ar[r]^{g_n}  &{Y_{n+1}} \ar@{-->}[r]^-{\delta^{'}} &.}
$$
By {\rm(EA2-1)}, there exist morphisms $\varphi^{''}_i:X_i\rightarrow Y_i$ for $2\leq i\leq n+1$, which give a morphism of distinguished $n$-exangles
$$\xymatrix{
X_0\ar[r]^{f_0}\ar@{}[dr] \ar@{=}[d] &X_1 \ar[r]^{f_1} \ar@{}[dr]\ar[d]^{\varphi^{'}_1}&X_2 \ar[r]^{f_2} \ar@{}[dr]\ar@{-->}[d]^{\varphi^{''}_2}&\cdot\cdot\cdot \ar[r]\ar@{}[dr] &X_n \ar[r]^{f_n} \ar@{}[dr]\ar@{-->}[d]^{\varphi^{''}_n}&X_{n+1} \ar@{}[dr]\ar@{-->}[d]^{\varphi^{''}_{n+1}} \ar@{-->}[r]^-{(\varphi_{n+1})^{*}{\delta^{'}}} &\\
{X_0}\ar[r]^{g_0} &{Y_1}\ar[r]^{g_1}&{Y_2} \ar[r]^{g_2} &\cdot\cdot\cdot \ar[r] &{Y _n}\ar[r]^{g_n}  &{Y_{n+1}} \ar@{-->}[r]^-{\delta^{'}} &}
$$
and moreover, the following
$$\xymatrix{
 X_1 \ar[r]^{f_1}\ar[d]^{\varphi^{'}_1} & X_2 \ar[r]^{f_2}\ar[d]^{\varphi^{''}_2} & \cdots \ar[r]^{f_{n-1}}& X_{n}\ar[r]^{f_{n}}\ar[d]^{\varphi^{''}_{n}}& X_{n+1} \ar[d]^{\varphi^{''}_{n+1}} \\
 Y_1 \ar[r]^{g_1} & Y_2 \ar[r]^{g_2} & \cdots\ar[r]^{g_{n-1}}& Y_{n} \ar[r]^{g_{n}} & Y_{n+1} \\
}$$ is a homotopy cartesian diagram and $(f_0)_{*}{\delta^{'}}$ is the differential.

Since $(\varphi_{n+1})^{*}{\delta^{'}}=(\varphi^{''}_{n+1})^{*}{\delta^{'}}$, that is, $(\varphi_{n+1}-\varphi^{''}_{n+1})^{*}{\delta^{'}}=0$, then by the exactness of
$$
\C(X_{n+1},Y_{n})\xrightarrow{\C(X_{n+1},g_{n})}\C(X_{n+1},Y_{n+1})\xrightarrow{~\delta^{'}\ssh~}\E(X_{n+1},X_0),
$$
there is a morphism $h_{n+1}:X_{n+1}\rightarrow Y_{n}$ which gives $g_{n}h_{n+1}=\varphi_{n+1}-\varphi^{''}_{n+1}$.

Since   $g_{n}(\varphi^{'}_{n}-\varphi^{''}_{n}-h_{n+1}f_{n})=g_{n}\varphi^{'}_{n}-g_{n}\varphi^{''}_{n}-g_{n}h_{n+1}f_{n}=g_{n}\varphi^{'}_{n}-g_{n}\varphi^{''}_{n}-\varphi_{n+1}f_{n}+\varphi^{''}_{n+1}f_{n}=0$, then by the exactness of
$$
\C(X_{n},Y_{n-1})\xrightarrow{\C(X_{n},g_{n-1})}\C(X_{n},Y_{n})\xrightarrow{\C(X_{n},g_{n})}\C(X_{n},Y_{n+1}),
$$
there is a morphism $h_{n}:X_{n}\rightarrow Y_{n-1}$ which gives $g_{n-1}h_{n}=\varphi^{'}_{n}-\varphi^{''}_{n}-h_{n+1}f_{n}$.

Inductively, for $i=2,3,\cdots,n-1$, we obtain $h_{i}:X_{i}\rightarrow Y_{i-1}$ such that $g_{i-1}h_{i}=\varphi^{'}_{i}-\varphi^{''}_{i}-h_{i+1}f_{i}$. In particular, $g_{1}(\varphi^{'}_{1}-\varphi^{'}_{1}-h_{2}f_{1})=-g_{1}h_{2}f_{1}=-\varphi^{'}_{2}f_{1}+\varphi^{''}_{2}f_{1}+h_{3}f_{2}f_{1}=-g_{1}\varphi^{'}_{1}+g_{1}\varphi^{'}_{1}=0$, and hence there exist $h_{1}:X_{1}\rightarrow Y_{0}$ which gives $g_{0}h_{1}=-h_{2}f_{1}$.

Set \[{\varphi_{i}}=\begin{cases}

\varphi^{'}_{i}=\varphi^{''}_{i}+h_{i+1}f_{i}+g_{i-1}h_{i}&~~\text{if}~ i=2,3,\cdots,n\\

\varphi^{'}_{i}+h_{i+1}f_i&~~

\text{if} ~i=1

\end{cases},\]
then $\varphi_{1}f_0=(\varphi^{'}_{1}+h_{2}f_1)f_0=\varphi^{'}_{1}f_0=g_0$, $\varphi_{2}f_1=(\varphi^{''}_{2}+h_{3}f_2+g_{1}h_2)f_1=\varphi^{''}_{2}f_1+g_{1}h_2f_1=g_1\varphi^{'}_{1}+g_{1}h_2f_1=g_1(\varphi^{'}_{1}+h_2f_1)=g_1\varphi_{1}$. The following commutative diagram
$$\xymatrixcolsep{4.5pc}\xymatrixrowsep{2.7pc}\xymatrix{
X_1\ar[r]^{\left(
             \begin{smallmatrix}
               -f_1 \\
               \varphi^{'}_1 \\
             \end{smallmatrix}
           \right)
}\ar@{=}[d]& X_2\oplus Y_1\ar[r]^{\left(
             \begin{smallmatrix}
              -f_2 & 0 \\
               \varphi^{''}_2 & g_1 \\
             \end{smallmatrix}
           \right)}\ar[d]^{\left(
             \begin{smallmatrix}
              1 & 0 \\
              -h_2 & 1 \\
             \end{smallmatrix}
           \right)}& X_3\oplus Y_2\ar[r]^{\left(
             \begin{smallmatrix}
              -f_3 & 0 \\
               \varphi^{''}_3 & g_2 \\
             \end{smallmatrix}
           \right)}\ar[d]^{\left(
             \begin{smallmatrix}
             1 & 0 \\
              -h_3 & 1 \\
             \end{smallmatrix}
           \right)}& \cdots\\
X_1\ar[r]^{\left(
             \begin{smallmatrix}
               -f_1 \\
               \varphi_1 \\
             \end{smallmatrix}
           \right)
}& X_2\oplus Y_1\ar[r]^{\left(
             \begin{smallmatrix}
              -f_2 & 0 \\
               \varphi_2 & g_1 \\
             \end{smallmatrix}
           \right)}& X_3\oplus Y_2\ar[r]^{\left(
             \begin{smallmatrix}
              -f_3 & 0 \\
               \varphi_3 & g_2 \\
             \end{smallmatrix}
           \right)}& \cdots\\
}$$
$$\xymatrixcolsep{5.5pc}\xymatrixrowsep{2.5pc}\xymatrix{
\cdots\ar[r]^{\left(
             \begin{smallmatrix}
              -f_{n-1} & 0 \\
               \varphi^{''}_{n-1} & g_{n-2} \\
             \end{smallmatrix}
           \right)}& X_{n}\oplus Y_{n-1}\ar[d]^{\left(
             \begin{smallmatrix}
             1 & 0 \\
             -h_n & 1 \\
             \end{smallmatrix}
           \right)}\ar[r]^{\left(
             \begin{smallmatrix}
              -f_{n} & 0 \\
               \varphi^{''}_{n} & g_{n-1} \\
             \end{smallmatrix}
           \right)}& X_{n+1}\oplus Y_{n}\ar[r]^{\left(
             \begin{smallmatrix}
               \varphi^{''}_{n+1} & g_{n} \\
             \end{smallmatrix}
           \right)}\ar[d]^{\left(
             \begin{smallmatrix}
              1 & 0 \\
              -h_{n+1} & 1 \\
             \end{smallmatrix}
           \right)} & Y_{n+1} \ar@{=}[d] & \\
\cdots\ar[r]^{\left(
             \begin{smallmatrix}
              -f_{n-1} & 0 \\
               \varphi_{n-1} & g_{n-2} \\
             \end{smallmatrix}
           \right)}& X_{n}\oplus Y_{n-1}\ar[r]^{\left(
             \begin{smallmatrix}
              -f_{n} & 0 \\
               \varphi_{n} & g_{n-1} \\
             \end{smallmatrix}
           \right)}& X_{n+1}\oplus Y_{n}\ar[r]^{\left(
             \begin{smallmatrix}
               \varphi_{n+1} & g_{n} \\
             \end{smallmatrix}
           \right)}& Y_{n+1} &
}$$
implies that
$$\xymatrixcolsep{4.5pc}\xymatrix{
X_1\ar[r]^{\left(
             \begin{smallmatrix}
               -f_1 \\
               \varphi_1 \\
             \end{smallmatrix}
           \right)
}& X_2\oplus Y_1\ar[r]^{\left(
             \begin{smallmatrix}
              -f_2 & 0 \\
               \varphi_2 & g_1 \\
             \end{smallmatrix}
           \right)}& X_3\oplus Y_2\ar[r]^{\left(
             \begin{smallmatrix}
              -f_3 & 0 \\
               \varphi_3 & g_2 \\
             \end{smallmatrix}
           \right)}& \cdots\\
}$$
$$\begin{gathered}\xymatrixcolsep{5pc}\xymatrix{
\cdots\ar[r]^{\left(
             \begin{smallmatrix}
              -f_{n-1} & 0 \\
               \varphi_{n-1} & g_{n-2} \\
             \end{smallmatrix}
           \right)}& X_{n}\oplus Y_{n-1}\ar[r]^{\left(
             \begin{smallmatrix}
              -f_{n} & 0 \\
               \varphi_{n} & g_{n-1} \\
             \end{smallmatrix}
           \right)}& X_{n+1}\oplus Y_{n}\ar[r]^{\left(
             \begin{smallmatrix}
               \varphi_{n+1} & g_{n} \\
             \end{smallmatrix}
           \right)}& Y_{n+1} \overset{{(f_0)_{*}{\delta^{'}}}}{\dashrightarrow}
}\end{gathered}\eqno{}$$
is a distinguished $n$-exangle. Therefore, $(\id_{X_0}, \varphi_{n+1})$ has a good lift $(\id_{X_0},\varphi_{1},\varphi_{2},\cdots,\varphi_{n}, \varphi_{n+1})$. That is to say {\rm(EA2)} holds.
\end{proof}
\begin{corollary}\rm
In Theorem \ref{thm1}, when $n=1$, it is just the dual of Theorem 3.3 in
\cite{KLW}.
\end{corollary}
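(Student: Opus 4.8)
The plan is to prove the corollary by direct comparison rather than by any new argument: I would specialize the statement of Theorem~\ref{thm1} to $n=1$ and check, term by term, that the resulting assertion is literally the dual of \cite[Theorem 3.3]{KLW}. First I would record what \rm{(EA2-1)} becomes when $n=1$. In this case every $X_{\bullet}\in\CC$ has only the three nonzero terms $X_0,X_1,X_2$, so the two hypotheses are distinguished $1$-exangles (equivalently, conflations in the extriangulated category $(\C,\E,\s)$)
\[
X_0\xrightarrow{f_0}X_1\xrightarrow{f_1}X_2\overset{\delta}{\dashrightarrow}
\qquad\text{and}\qquad
X_0\xrightarrow{g_0}Y_1\xrightarrow{g_1}Y_2\overset{\delta'}{\dashrightarrow},
\]
sharing the inflation source $X_0$, together with a morphism $\varphi_1\co X_1\to Y_1$ satisfying $\varphi_1f_0=g_0$. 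The only new morphism produced is $\varphi_2\co X_2\to Y_2$, completing $(\id_{X_0},\varphi_1)$ to a morphism of $1$-exangles, and the asserted homotopy cartesian diagram collapses to the single square on the vertices $X_1,X_2,Y_1,Y_2$. By the definition of homotopy cartesian diagram (with the generic indices reindexed by one), this square being homotopy cartesian means precisely that
\[
X_1\xrightarrow{\left(\begin{smallmatrix}-f_1\\\varphi_1\end{smallmatrix}\right)}X_2\oplus Y_1\xrightarrow{\left(\begin{smallmatrix}\varphi_2&g_1\end{smallmatrix}\right)}Y_2\overset{(f_0)_{*}\delta'}{\dashrightarrow}
\]
is a distinguished $1$-exangle with differential $(f_0)_{*}\delta'$.

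Next I would recall \cite[Theorem 3.3]{KLW} and form its dual. That result is the base-change (pullback) completion for extriangulated categories, in which the identity is imposed on the deflation target and one constructs the complementary map so that an associated square is homotopy cartesian. Passing to the dual amounts to replacing $\C$ by $\C\op$ and $\E$ by $\E\op$: inflations and deflations are interchanged, each conflation is reversed, base change $c\uas\rho$ becomes cobase change $a\sas\delta$, and the pullback square becomes a pushout square. Reading the dualized statement back inside $\C$, I would check that it reproduces exactly the data displayed above --- two conflations sharing the inflation source $X_0$, a map $\varphi_1$ compatible with the inflations, the induced completion $\varphi_2$, and the homotopy cartesian square whose differential is the cobase change $(f_0)_{*}\delta'$ of $\delta'$ along $f_0$. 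This matching is the whole content of the corollary.

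The one point that requires care, and the step I expect to be the main obstacle, is reconciling the sign and orientation conventions of the two sources. KLW write the homotopy cartesian square and its differential with a particular choice of signs on the off-diagonal entries of the $2\times2$ matrices, which need not coincide literally with the entries $\left(\begin{smallmatrix}-f_1\\\varphi_1\end{smallmatrix}\right)$ and $\left(\begin{smallmatrix}\varphi_2&g_1\end{smallmatrix}\right)$ appearing here. This is exactly the discrepancy addressed by the Remark following the definition of homotopy cartesian diagram: the explicit isomorphism there between the forms (2.1) and (2.2) shows that the two sign conventions define isomorphic distinguished $n$-exangles, hence the same homotopy cartesian square. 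I would invoke that isomorphism specialized to $n=1$, where $(-1)^{n+1}=1$ makes it particularly transparent (the connecting vertical maps reduce to identities on the relevant terms), to identify our square with KLW's; the differentials then agree because both are the cobase change of $\delta'$ along $f_0$. With the conventions matched, the two statements are identical, which establishes the corollary.
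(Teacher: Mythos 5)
Your proposal is correct and is essentially the paper's own (implicit) justification: the corollary is stated without proof precisely because it follows from the direct specialization to $n=1$ and term-by-term dualization of \cite[Theorem 3.3]{KLW} that you carry out, including the identification of the single homotopy cartesian square $X_1\xrightarrow{\left(\begin{smallmatrix}-f_1\\ \varphi_1\end{smallmatrix}\right)}X_2\oplus Y_1\xrightarrow{(\varphi_2,\ g_1)}Y_2$ with differential $(f_0)_{*}\delta'$. Your one substantive point --- reconciling sign conventions via the isomorphism of the two forms of the distinguished $n$-exangle in the Remark, which at $n=1$ becomes trivial since $(-1)^{n+1}=1$ --- is handled correctly (any residual discrepancy with KLW's placement of signs is absorbed by the standard sign-change isomorphisms of extriangles), so nothing further is needed.
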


\section{Closed subfunctors arising from half exact functors}
In this section, we introduce methods of constructing closed subfunctors of an $n$-exangulated category $(\C,\E,\s)$ from half exact functors. Let's start with the following key lemma.

\begin{lemma}\label{lem1}{\rm\cite[Proposition 3.16]{HLN}}
Let $(\C,\E,\s)$ be an $n$-exangulated category. For any additive subfunctor $\F\subseteq\E$, the following statements are equivalent.

{\rm (1)}~ $(\C,\F,\s\hspace{-1.2mm}\mid_{\F})$ is an $n$-exangulated category.

{\rm (2)}~ $\s\hspace{-1.2mm}\mid_{\F}$-inflations are closed under composition.

{\rm (3)}~ $\s\hspace{-1.2mm}\mid_{\F}$-deflations are closed under composition.

\rm We call an additive subfunctor $\F\subseteq\E$ a closed subfunctor if it satisfies the above equivalent conditions. In this case, we call $(\C,\F,\s\hspace{-1.2mm}\mid_{\F})$ a relative theory of $(\C,\E,\s)$. For a relative theory $(\C,\F,\s\hspace{-1.2mm}\mid_{\F})$, we briefly denote it by $(\C,\F)$.
\end{lemma}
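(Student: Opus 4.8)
The plan is to reduce the whole statement to a single point: for an additive subfunctor $\F\subseteq\E$, \emph{every} axiom of an $n$-exangulated category is inherited by the triplet $(\C,\F,\s|_{\F})$ except the closure axiom (EA1). First I would observe that the $\s|_{\F}$-distinguished $n$-exangles are exactly the $\s$-distinguished $n$-exangles $\langle X_{\bullet},\del\rangle$ whose extension lies in $\F(X_{n+1},X_0)$. Since $\s|_{\F}$ coincides with $\s$ on $\F$, conditions (R0), (R1) and the $n$-exangle conditions of Definition \ref{def1} transfer verbatim, and (R2) holds because $0\in\F$. Next I would check that (EA2) and its dual hold for $\F$ with no extra hypothesis: feeding $\F$-extensions into the ambient (EA2) produces a good lift whose mapping cone realizes $(d^X_0)_{\ast}\rho$, and this extension again lies in $\F$ because $\F$ is closed under pushforward; the argument for (EA2${}\op$) is dual. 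Hence $(\C,\F,\s|_{\F})$ is $n$-exangulated precisely when it satisfies (EA1), i.e. precisely when $\s|_{\F}$-inflations and $\s|_{\F}$-deflations are both closed under composition. In particular (1)$\Rightarrow$(2) and (1)$\Rightarrow$(3) are immediate.

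The substance therefore lies in (2)$\Rightarrow$(1), for which I only need to upgrade the inflation-closure hypothesis (2) to closure of $\s|_{\F}$-deflations under composition. Given composable $\s|_{\F}$-deflations $g$ and $g'$, realized by $\F$-distinguished $n$-exangles $\langle X_{\bullet},\del\rangle$ and $\langle Y_{\bullet},\rho\rangle$ with $\del,\rho\in\F$, the ambient (EA1) already makes $g'\circ g$ an $\s$-deflation, so it is realized by some $n$-exangle $\langle Z_{\bullet},\del''\rangle$; the task is to exhibit a realization of $g'\circ g$ whose extension lies in $\F$. I would do this by running the octahedral-type diagram attached to the pair $(g,g')$ — produced from the good colifts supplied by (EA2${}\op$) — which organizes the two ``kernel'' $n$-exangles of $g$ and $g'$ and the prospective $n$-exangle of $g'\circ g$ into one commutative array, together with the relations expressing $\del''$ in terms of pushforwards and pullbacks of $\del$ and $\rho$. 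Closure of inflations under composition (2) is exactly what is needed to force the relevant connecting extension into $\F$, and functoriality of $\F$ then propagates this to $\del''$, showing $g'\circ g$ is an $\s|_{\F}$-deflation. This completes (EA1) for $\F$ and hence (2)$\Rightarrow$(1).

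Finally I would obtain (3)$\Rightarrow$(1) by duality rather than repeating the argument: the opposite triplet $(\C\op,\E\op,\s\op)$ is again $n$-exangulated, the subfunctor $\F$ induces $\F\op\subseteq\E\op$, and under this dualization $\s|_{\F}$-deflations become $\s\op|_{\F\op}$-inflations while condition (1) is self-dual; applying the already-proved implication (2)$\Rightarrow$(1) to $\F\op$ in $\C\op$ yields (3)$\Rightarrow$(1) in $\C$. The main obstacle is the middle step: carrying out the $n$-dimensional octahedron in the mapping-cone formalism of \cite{HLN} and tracking which extension is a pushforward or pullback of which, so as to isolate the single composite inflation whose $\F$-membership is guaranteed by (2). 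For $n>1$ this bookkeeping is heavier than in the extriangulated case, since the kernel and cokernel data are entire complexes rather than single objects; once the comparison diagram of $n$-exangles is set up, however, the passage from (2) to $\del''\in\F$ is a formal consequence of $\F$ being a subfunctor.
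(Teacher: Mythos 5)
Your opening and closing moves are correct and are exactly how the standard argument is organized: for any additive subfunctor $\F\subseteq\E$ the restriction $\s|_{\F}$ is automatically an exact realization of $\F$, and {\rm(EA2)}, {\rm(EA2$\op$)} descend because the mapping cone extension $(d_0^X)_{*}\rho$ supplied by the ambient axiom is a pushforward of an $\F$-extension and hence lies in $\F$; so (1) is equivalent to {\rm(EA1)} for $\s|_{\F}$, i.e.\ to (2) and (3) jointly, and passing to the opposite category reduces the whole lemma to the single implication (2)$\Rightarrow$(3). (Note that the paper under review does not prove this statement at all --- it quotes it as \cite[Proposition 3.16]{HLN} --- so the relevant comparison is with the proof given there.)

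The genuine gap is in that one implication, and it is not bookkeeping. The octahedral array attached to two composable $\s|_{\F}$-deflations does \emph{not} express $\del''$ as a pushforward or pullback of $\del$ and $\rho$; it only expresses certain \emph{images} of $\del''$ that way. Concretely, for $n=1$, with conflations $K_{g}\xrightarrow{i}B\xrightarrow{g}C$ (extension $\del$), $K_{g'}\xrightarrow{j}C\xrightarrow{g'}D$ (extension $\rho$) and $K_{g'g}\xrightarrow{k}B\xrightarrow{g'g}D$ (extension $\del''$), the array gives the kernel conflation $K_{g}\xrightarrow{m}K_{g'g}\xrightarrow{e}K_{g'}$ with extension $j^{*}\del\in\F$ and the relations $e_{*}\del''=\rho$ and $(g')^{*}\del''=m_{*}\del$. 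These facts plus functoriality of $\F$ can never force $\del''\in\F$: if they could, \emph{every} additive subfunctor would be closed and the lemma would be vacuous (non-closed additive subfunctors already exist for exact categories). Moreover, inside the array there is no composable pair of $\F$-inflations whose composite recovers $k$; the only available composition relation, $k\circ m=i$, points the wrong way. So your assertion that, once the comparison diagram is set up, ``the passage from (2) to $\del''\in\F$ is a formal consequence of $\F$ being a subfunctor'' is precisely where the proof fails. What \cite[Proposition 3.16]{HLN} (following the classical argument of Dr\"{a}xler--Reiten--Smal{\o}--Solberg for exact categories) actually does is manufacture the needed composite from outside the octahedron: the mapping cone of the good lift of $(1_{K_g},j)$ gives the conflation $K_{g'g}\to K_{g'}\oplus B\xrightarrow{(j,\ g)}C$ with extension $m_{*}\del\in\F$, so its inflation is an $\F$-inflation; one composes it with the $\F$-inflation $j\oplus 1_{B}$ obtained by direct-summing the conflation of $\rho$ with a contractible complex; hypothesis (2) then makes the composite an $\F$-inflation, and after an elementary automorphism of $C\oplus B$ this composite becomes the map $K_{g'g}\to C\oplus B$ with components $(0,\,-k)$, whose cone extension is the split pullback $\pi^{*}\del''$; finally $\del''=\iota^{*}\pi^{*}\del''\in\F$. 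This direct-sum-with-a-contractible-complex device and the concluding $\pi^{*}/\iota^{*}$ identification are the entire mathematical content of the lemma (for $n\geq 2$ the pair of $\F$-inflations one composes is $d_0$ of a pulled-back realization and $d_0$ of the mapping cone of a good lift, which makes the cone-comparison step heavier still), and none of it appears in your sketch.
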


\begin{definition}\label{31} Let $\A$ be an abelian category. An additive functor $F:\C \rightarrow\A$ is called a half exact functor if the sequence $$FX_0\xrightarrow{Ff_0}FX_1\xrightarrow{Ff_1}\cdots\xrightarrow{Ff_{n-2}}FX_{n-1}
\xrightarrow{Ff_{n-1}}FX_n\xrightarrow{Ff_n}FX_{n+1}$$ is exact for any distinguished $n$-exangle
$X_0\xrightarrow{f_0}X_1\xrightarrow{f_1}\cdots\xrightarrow{f_{n-2}}X_{n-1}
\xrightarrow{f_{n-1}}X_n\xrightarrow{f_n}X_{n+1}\overset{\delta}{\dashrightarrow}.$ Moreover, if $Ff_n$ (resp. $Ff_0$) is an epimorphism (resp. monomorphism), we call $F$ a right exact functor (resp. left exact functor).
\end{definition}

\begin{remark}
If the category $\C$ are extriangulated, then Definition \ref{31} coincides with the definition
of half exact functor (homological functor) of extriangulated categories (cf. \cite{LN, Oga}).
\end{remark}

\begin{definition} Let $F:\C \rightarrow\A$ be a half exact functor. We define a subset $\E^{F}_{R}(X_{n+1},X_{0})$ of  $\E(X_{n+1},X_{0})$ consisting of $\delta$ such that for any distinguished $n$-exangle
$X_0\xrightarrow{f_0}X_1\xrightarrow{f_1}\cdots\xrightarrow{f_{n-2}}X_{n-1}
\xrightarrow{f_{n-1}}X_n\xrightarrow{f_n}X_{n+1}\overset{\delta}{\dashrightarrow}$, we have that $Ff_n$ is an epimorphism in $\A$. Similarly, we define a subset $\E^{F}_{L}(X_{n+1},X_{0})$ of $\E(X_{n+1},X_{0})$ consisting of $\delta$ such that $Ff_0$ is a monomorphism in $\A$.
\end{definition}
Note that the above definition is well-defined, that is, it does not depend on the choice of a distinguished $n$-exangle of $\delta$. Moreover, $\E^{F}_{R}(X_{n+1},X_{0})$ defines the maximum closed subfunctor such that F becomes a right exact functor.

\begin{theorem}\label{th1}
Let $F:\C \rightarrow\A$ be a half exact functor. Then the following statements hold.

{\rm (1)}~ $\E^{F}_{R}$ is a closed subfunctor of $\E$, hence $(\C,\E^{F}_{R})$ is a relative theory of $(\C,\E,\s)$.

{\rm (2)}~$F$ restricts to a right functor $F:(\C,\E^{F}_{R})\rightarrow \A$.

{\rm (3)}~Let $(\C,\F)$ be a relative theory of $(\C,\E,\s)$. If $F$ restricts to a right functor $F:(\C,\F)\rightarrow \A$, then we have $\F\subseteq\E^{F}_{R}$.

\end{theorem}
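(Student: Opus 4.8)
The plan is to make part (1) the heart of the argument; parts (2) and (3) then follow almost formally. The basic device is the well-definedness of $\E^F_R$ recorded before the statement, which I would isolate as a lemma: if $X_\bullet,X'_\bullet$ both realize $\delta$, then {\rm(R0)} lifts $(\id,\id)\colon\delta\to\delta$ to morphisms $u_\bullet\colon X_\bullet\to X'_\bullet$ and $v_\bullet\colon X'_\bullet\to X_\bullet$, and reading the last squares gives $d_n^X=d_n^{X'}u_n$ and $d_n^{X'}=d_n^Xv_n$; since an epic composite has epic outer factor (if $\beta\alpha$ is epic then $\beta$ is epic), $Fd_n^X$ is epic iff $Fd_n^{X'}$ is epic, so membership in $\E^F_R$ can be tested on any one realization. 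To prove (1) I would show that $\E^F_R$ is an additive subfunctor of $\E$ and then invoke the criterion of Lemma \ref{lem1}(3).

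For the subfunctor property, closure under pushout and under direct sums is routine. If $a\colon X_0\to X_0'$, a lift of $(a,\id)\colon\delta\to a_*\delta$ is a morphism of exangles whose last square reads $d_n^X=d_n^Z b_n$, so $Fd_n^X$ epic forces $Fd_n^Z$ epic and $a_*\delta\in\E^F_R$; in particular $-\delta=(-\id_{X_0})_*\delta\in\E^F_R$. For direct sums, $X_\bullet\oplus X'_\bullet$ realizes $\delta\oplus\delta'$ with deflation $d_n^X\oplus d_n^{X'}$, which the additive $F$ sends to $Fd_n^X\oplus Fd_n^{X'}$, epic iff both are. Since the split extension has $F$-epic (split) deflation we get $0\in\E^F_R$, and with the Baer-sum identity $\delta+\delta'=(\nabla_{X_0})_*(\Delta_{X_{n+1}})^*(\delta\oplus\delta')$ the subgroup property — and, with the direct-sum closure, the additivity of $\E^F_R$ — is reduced to closure under pullback.

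Closure under pullback is the main obstacle, and it is exactly where the axiom {\rm(EA2)} enters. Let $\delta\in\E^F_R$ be realized by $P_\bullet$ (so $Fp_n$ is epic, $p_n=d^P_n$), let $c\colon C'\to X_{n+1}$, and realize $c^*\delta$ by some $Q_\bullet$. Applying {\rm(EA2)} to $(\id_{X_0},c)\colon c^*\delta\to\delta$ gives a good lift $f_\bullet\colon Q_\bullet\to P_\bullet$ with $f_{n+1}=c$ whose mapping cone $\langle M^f_\bullet,(q_0)_*\delta\rangle$ is a distinguished $n$-exangle; its degree-$n$ differential is $[\,c\ \ p_n\,]\colon C'\oplus P_n\to X_{n+1}$ and the preceding differential is $\left[\begin{smallmatrix}-q_n&0\\ f_n&d_{n-1}^P\end{smallmatrix}\right]$, where $q_n=d^Q_n$. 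Applying the half exact $F$, the complex $FM^f_\bullet$ is exact at $F(C'\oplus P_n)$, and a short chase in $\A$ finishes the job: for $y\in FC'$, as $Fp_n$ is epic there is $z$ with $Fp_n(z)=Fc(y)$, so $(y,-z)\in\Ker F[\,c\ \ p_n\,]$; by exactness it comes from some $(s,t)\in FQ_n\oplus FP_{n-1}$, and the top coordinate of the differential gives $Fq_n(-s)=y$. Thus $Fq_n$ is epic and $c^*\delta\in\E^F_R$.

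Finally, $\s\hspace{-1.2mm}\mid_{\E^F_R}$-deflations are closed under composition: if $f,g$ are such deflations then $Ff,Fg$ are epic, so $F(gf)=Fg\circ Ff$ is epic, while {\rm(EA1)} makes $gf$ an $\s$-deflation realized by an exangle whose last map is $gf$, and well-definedness then places its extension in $\E^F_R$. By Lemma \ref{lem1} this proves (1). For (2), each distinguished $n$-exangle of $(\C,\E^F_R)$ is one of $(\C,\E)$, so $F$ remains half exact on it and its deflation is $F$-epic by the very definition of $\E^F_R$; hence $F\colon(\C,\E^F_R)\to\A$ is right exact. For (3), if $F$ is right exact on a relative theory $(\C,\F)$, then every $\delta\in\F$ has a realizing exangle — distinguished in $(\C,\F)$, hence in $(\C,\E)$ — with $F$-epic deflation, so $\delta\in\E^F_R$ and $\F\subseteq\E^F_R$, which is the asserted maximality.
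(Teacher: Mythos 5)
Your proposal is correct and follows essentially the same route as the paper: pushout-closure via lifts, pullback-closure via an (EA2) good lift whose mapping cone is a distinguished $n$-exangle to which the half exactness of $F$ is applied, additivity via direct sums and the Baer-sum identity, closure of deflations under composition via (EA1) and well-definedness, and finally Lemma \ref{lem1}. The only real difference is the last step of pullback-closure: you run an element-style chase (justifiable in a general abelian category by the member/pseudo-element calculus), whereas the paper argues element-freely by factoring $(k,0)$ through the cokernel $FX_{n+1}$ and using that $Ff_n$ is epic — both simply read off the exactness of the $F$-image of the mapping cone.
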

\begin{proof}
(1) First of all, we claim that $\E^{F}_{R}$ is a subfunctor of $\E$. In fact, for any $\delta\in \E^{F}_{R}(X_{n+1},X_{0})$ and any morphism $a_{0}:X_{0}\rightarrow Y_{0}$, by {(EA2$\op$)}, we obtain the following commutative diagram
$$\xymatrix{
X_0\ar[r]^{f_0}\ar@{}[dr] \ar[d]^{a_0} &X_1 \ar[r]^{f_1} \ar@{}[dr]\ar[d]^{a_1}&X_2 \ar[r]^{f_2} \ar@{}[dr]\ar[d]^{a_2}&\cdot\cdot\cdot \ar[r]\ar@{}[dr] &X_n \ar[r]^{f_n} \ar@{}[dr]\ar[d]^{a_n}&X_{n+1} \ar@{}[dr]\ar@{=}[d] \ar@{-->}[r]^-{\delta} &\\
{Y_0}\ar[r]^{g_0} &{Y_1}\ar[r]^{g_1}&{Y_2} \ar[r]^{g_2} &\cdot\cdot\cdot \ar[r] &{Y _n}\ar[r]^{g_n}  &{X_{n+1}} \ar@{-->}[r]^-{a_0\delta} &.}
$$
Applying $F$ to the above diagram, we have the following commutative diagram with exact rows
$$\xymatrix{
FX_0\ar[r]^{Ff_0}\ar@{}[dr] \ar[d]^{Fa_0} &X_1 \ar[r]^{Ff_1} \ar@{}[dr]\ar[d]^{Fa_1}&FX_2 \ar[r]^{Ff_2} \ar@{}[dr]\ar[d]^{Fa_2}&\cdot\cdot\cdot \ar[r]\ar@{}[dr] &FX_n \ar[r]^{Ff_n} \ar@{}[dr]\ar[d]^{Fa_n}&FX_{n+1} \ar@{}[dr]\ar@{=}[d]  &\\
{FY_0}\ar[r]^{Fg_0} &{FY_1}\ar[r]^{Fg_1}&{FY_2} \ar[r]^{Fg_2} &\cdot\cdot\cdot \ar[r] &{FY _n}\ar[r]^{Fg_n}  &{FX_{n+1}}&}
$$
in $\A$. Since $Ff_n$ is epimorphic, then $Fg_n$ is also epimorphic. So we have $a_0\delta\in\E^{F}_{R}(X_{n+1},Y_{0})$.

For any $\delta\in \E^{F}_{R}(X_{n+1},X_{0})$ and any morphism $a_{n+1}:Y_{n+1}\rightarrow X_{n+1}$, by {(EA2)}, we obtain the following commutative diagram
$$\xymatrix{
X_0\ar[r]^{g_0}\ar@{}[dr] \ar@{=}[d]  &Y_1 \ar[r]^{g_1} \ar@{}[dr]\ar[d]^{a_1}&Y_2 \ar[r]^{g_2} \ar@{}[dr]\ar[d]^{a_2}&\cdot\cdot\cdot \ar[r]\ar@{}[dr] &Y_n \ar[r]^{g_n} \ar@{}[dr]\ar[d]^{a_n}&Y_{n+1} \ar@{}[dr]\ar[d]^{a_{n+1}} \ar@{-->}[r]^-{\delta a_{n+1}} &\\
{X_0}\ar[r]^{f_0} &{X_1}\ar[r]^{f_1}&{X_2} \ar[r]^{f_2} &\cdot\cdot\cdot \ar[r] &{X _n}\ar[r]^{f_n}  &{X_{n+1}} \ar@{-->}[r]^-{\delta} &}
$$
such that
$$\xymatrixcolsep{4.5pc}\xymatrix{
Y_1\ar[r]^{\left(
             \begin{smallmatrix}
               -g_1 \\
               a_1 \\
             \end{smallmatrix}
           \right)
}& Y_2\oplus X_1\ar[r]^{\left(
             \begin{smallmatrix}
              -g_2 & 0 \\
               a_2 & f_1 \\
             \end{smallmatrix}
           \right)}& Y_3\oplus X_2\ar[r]^{\left(
             \begin{smallmatrix}
              -g_3 & 0 \\
               a_3 & f_2 \\
             \end{smallmatrix}
           \right)}& \cdots\\
}$$
$$\begin{gathered}\xymatrixcolsep{5pc}\xymatrix{
\cdots\ar[r]^{\left(
             \begin{smallmatrix}
              -g_{n-1} & 0 \\
               a_{n-1} & f_{n-2} \\
             \end{smallmatrix}
           \right)}& Y_{n}\oplus X_{n-1}\ar[r]^{\left(
             \begin{smallmatrix}
              -g_{n} & 0 \\
               a_{n} & f_{n-1} \\
             \end{smallmatrix}
           \right)}& Y_{n+1}\oplus X_{n}\ar[r]^{\left(
             \begin{smallmatrix}
              a_{n+1} & f_{n} \\
             \end{smallmatrix}
           \right)}& X_{n+1} \overset{{g_0{\delta^{}}}}{\dashrightarrow}
}\end{gathered}\eqno{}$$
is a distinguished $n$-exangle. Note that $g_0\delta\in\E^{F}_{R}(X_{n+1},Y_{1})$ by the previous argument, we have the following exact sequence
$$\xymatrixcolsep{4.5pc}\xymatrix{
FY_1\ar[r]^{F\left(
             \begin{smallmatrix}
               -g_1 \\
               a_1 \\
             \end{smallmatrix}
           \right)
}&F Y_2\oplus FX_1\ar[r]^{F\left(
             \begin{smallmatrix}
              -g_2 & 0 \\
               a_2 & f_1 \\
             \end{smallmatrix}
           \right)}& FY_3\oplus F X_2\ar[r]^{F\left(
             \begin{smallmatrix}
              -g_3 & 0 \\
               a_3 & f_2 \\
             \end{smallmatrix}
           \right)}& \cdots\\
}$$
$$\begin{gathered}\xymatrixcolsep{5pc}\xymatrix{
\cdots\ar[r]^{F\left(
             \begin{smallmatrix}
              -g_{n-1} & 0 \\
               a_{n-1} & f_{n-2} \\
             \end{smallmatrix}
           \right)}& FY_{n}\oplus F X_{n-1}\ar[r]^{F\left(
             \begin{smallmatrix}
              -g_{n} & 0 \\
               a_{n} & f_{n-1} \\
             \end{smallmatrix}
           \right)}&F Y_{n+1}\oplus F X_{n}\ar[r]^{F\left(
             \begin{smallmatrix}
              a_{n+1} & f_{n} \\
             \end{smallmatrix}
           \right)}&F X_{n+1}\ar[r]&0
}\end{gathered}\eqno{}$$
in $\A$. In order to prove that $Fg_{n}$ is an epimorphism in $\A$, suppose that $k\colon FY_{n+1}\to M$ is a morphism in $\A$ satisfying $k\circ Fg_{n}=0$. Let $(k,0 )\in\C(F Y_{n+1}\oplus F X_{n},M)$. Since $k\circ Fg_{n}=0$, then $(k,0 )\left(
                                            \begin{smallmatrix}
                                              -Fg_{n} & 0 \\
                                              Fa_{n} & Ff_{n-1}
                                            \end{smallmatrix}
                                          \right)=(-kFg_{n},0 )=0$. Thus there exists unique a morphism $s:FX_{n+1} \rightarrow M$, such that $s\circ(Fa_{n+1},Ff_{n})=(k,0 ) $. So $s\circ Fa_{n+1}=k$ and $s\circ Ff_{n}=0$. That is to say, we have the following  commutative diagram

$$\xymatrixcolsep{4.5pc}\xymatrix{
FY_{n}\oplus F X_{n-1}\ar[r]^{\left(
             \begin{smallmatrix}
              -Fg_{n} & 0 \\
               Fa_{n} & Ff_{n-1} \\
             \end{smallmatrix}
           \right)}\ar[dr]_{0}& F Y_{n+1}\oplus F X_{n}\ar[r]^{\left(
             \begin{smallmatrix}
              Fa_{n+1} & Ff_{n} \\
             \end{smallmatrix}
           \right)}\ar[d]^{(k,0)} &F X_{n+1}\ar@{-->}[dl]^{s}\ar[r]&0. \\
 &M&&
}
$$
Note that $Ff_{n}$ is an epimorphism, so we have $s=0$. Then $k=s\circ Fa_{n+1}=0$. This shows that $Fg_{n}$ is an epimorphism in $\A$, hence we have $\delta a_{n+1}\in\E^{F}_{R}$. Thus $\E^{F}_{R}$ is a subfunctor of $\E$.

Next, we need to show that $\E^{F}_{R}(X_{n+1},X_{0})$ is a subgroup of $\E(X_{n+1},X_{0})$. Note that $0\in\E^{F}_{R}(X_{n+1},X_{0})$, we only need to show $\delta^{'}-\delta\in\E^{F}_{R}(X_{n+1},X_{0})$ for any $\delta^{'},\delta\in\E^{F}_{R}(X_{n+1},X_{0})$. For $(-\id,\id):X_{0}\oplus X_{0}\rightarrow X_{0}$ and $\left(
                                            \begin{smallmatrix}
                                              \id \\
                                              \id \\
                                            \end{smallmatrix}
                                          \right):X_{n+1}\oplus X_{n+1}\rightarrow X_{n+1}$, we have
$\delta^{'}-\delta=(-\id,\id)(\delta\oplus\delta^{'})\left(
                                            \begin{smallmatrix}
                                              \id \\
                                              \id \\
                                            \end{smallmatrix}
                                          \right)$, it is enough to show that $\delta\oplus\delta^{'}\in\E^{F}_{R}(X_{n+1}\oplus X_{n+1},X_{0}\oplus X_{0})$, and this follows from Proposition 3.2 in \cite{HLN}. Thus $\E^{F}_{R}$ is an additive subfunctor of $\E$.

Finally, let $X_{n}\ov{f_{n}}{\lra}X_{n+1}\ov{{g_{n}}}{\lra}Y_{n+1}$ be any sequence of morphisms in $\C$.
Assume that $f_{n}$ and $g_{n}$ are $\s\hspace{-1.2mm}\mid_{\E^{F}_{R}}$-deflations. By (EA1), we know that $g_{n}\circ f_{n}$ is an $\s$-deflation. Thus we assume
$${\s}(\delta)=[Z_{0}\xrightarrow{~{h_{0}}~}
Z_{1}\xrightarrow{~{h_{1}}~}
Z_{2}\xrightarrow{~{h_{2}}~}\cdots\xrightarrow{{h_{n-2}}~}Z_{n-1}\xrightarrow{~{h_{n-1}}~}X_n\xrightarrow{{g_{n}f_{n}}~}Y_{n+1}].$$
Note that $Fg_{n}$ and $Ff_{n}$ are epimorphic, then $F(g_{n}f_{n})$ is an epimorphism, so $\delta\in\E^{F}_{R}$. This shows that $\s\hspace{-1.2mm}\mid_{\E^{F}_{R}}$-deflations are closed under composition. Thus $\E^{F}_{R}$ is a closed subfunctor of $\E$.

(2) and (3) follow immediately from the construction of $\E^{F}_{R}$.
\end{proof}

\begin{remark}
Dually, we obtain the statement with respect to $\E^{F}_{L}$ by applying the above proposition to a contravariant half exact functor $F:\C\rightarrow\A^{\rm op}$.
\end{remark}
\begin{example}
Let $\H\subseteq\C$ be a full subcategory. Define subfunctors $\E_{\H}$ and $\E^{\H}$ of $\E$ by
$$\E_{\H}(X_{n+1},X_0)=\{\delta\in\E(X_{n+1},X_0)|{(\delta_{\sharp})}_H=0~ \text{for any}~H\in\H\},$$
$$\E^{\H}(X_{n+1},X_0)=\{\delta\in\E(X_{n+1},X_0)|{(\delta^{\sharp})}_H=0~ \text{for any}~H\in\H\}.$$
In \cite[Proposition 3.17]{HLN}, it is shown that $\E_{\H}$ and $\E^{\H}$ are closed subfunctors of $\E$. They are special cases of Theorem \ref{th1}. Since the restricted Yoneda functors $Y_{\H}:\C\rightarrow{\rm Mod}\H$ and $Y^{\H}:\C\rightarrow{\rm Mod}\H^{op}$ are half exact functors, moreover, $\E_{\H}=\E^{Y^{\H}}_{R}$ and $\E^{\H}=\E^{Y^{\H}}_{L}$ hold. In fact, for any distinguished $n$-exangle
$$X_0\xrightarrow{f_0}X_1\xrightarrow{f_1}\cdots\xrightarrow{f_{n-2}}X_{n-1}
\xrightarrow{f_{n-1}}X_n\xrightarrow{f_n}X_{n+1}\overset{\delta}{\dashrightarrow},$$
we have the following exact sequence
$$
\C(-,X_0)|_\H\xrightarrow{Y_{\H}(f_0)}\cdots\xrightarrow{Y_{\H}(f_{n-1})}\C(-,X_{n})|_\H\xrightarrow{Y_{\H}(f_{n})}\C(-,X_{n+1})|_\H\xrightarrow{~\del\ssh~}\E(-,X_0)|_\H.
$$
Thus we have $\E^{Y^{\H}}_{R}(X_{n+1},X_0)=\E_{\H}(X_{n+1},X_0)$.

\end{example}
\begin{corollary}\rm
In Theorem \ref{th1}, when $n=1$, it is just Proposition A in
\cite{S}.
\end{corollary}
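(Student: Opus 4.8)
The plan is to verify that every ingredient appearing in Theorem~\ref{th1} degenerates, when $n=1$, to the corresponding ingredient in Proposition~A of \cite{S}, so that the three assertions of Theorem~\ref{th1} become verbatim the existence, right-exactness, and maximality clauses of that proposition. First I would invoke \cite[Proposition 4.3]{HLN}, recorded in the excerpt, which identifies a $1$-exangulated category $(\C,\E,\s)$ with an extriangulated category; hence the ambient data on which both statements operate coincide. Next, by the Remark following Definition~\ref{31}, a half exact functor $F\colon\C\to\A$ in the sense of Definition~\ref{31} is exactly a half exact (homological) functor in the extriangulated sense of \cite{LN,Oga}, which is precisely the class of functors to which Proposition~A applies.

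The second step is to match the subfunctors. When $n=1$ a distinguished $n$-exangle is a conflation $X_0\xrightarrow{f_0}X_1\xrightarrow{f_1}X_2\overset{\delta}{\dashrightarrow}$, i.e. an $\s$-triangle in the usual extriangulated notation, and here $Ff_n=Ff_1$. Thus the defining condition for $\delta\in\E^{F}_{R}(X_2,X_0)$, namely that $Ff_1$ be an epimorphism in $\A$, is exactly the condition defining Sakai's subfunctor. I would note that this identification is already compatible with the well-definedness remark preceding Theorem~\ref{th1}: the membership condition does not depend on the chosen conflation realizing $\delta$, in agreement with \cite{S}. Consequently $\E^{F}_{R}$ coincides, as a subfunctor of $\E$, with the subfunctor constructed in Proposition~A.

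Finally I would read off the conclusion. Part (1) of Theorem~\ref{th1} says $\E^{F}_{R}$ is a closed subfunctor, yielding the \emph{existence} of the relevant relative theory; part (2) says $F$ restricts to a right exact functor on $(\C,\E^{F}_{R})$; and part (3) says any closed subfunctor $\F$ on which $F$ is right exact satisfies $\F\subseteq\E^{F}_{R}$, which is exactly the \emph{maximality}, and hence the \emph{uniqueness}, of the maximal such subfunctor. Together these three clauses constitute the statement of Proposition~A. The only genuine point requiring attention is the second step, the bookkeeping that the epimorphism condition on $Ff_1$ matches Sakai's right-exactness condition under the identification of $1$-exangles with extriangulated conflations; but this is a direct unwinding of conventions, and no argument beyond Theorem~\ref{th1} itself is needed.
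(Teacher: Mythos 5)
Your proposal is correct and matches the paper exactly: the paper states this corollary without proof, treating it as immediate from unwinding definitions, and your verification --- identifying $1$-exangulated categories with extriangulated ones via \cite[Proposition 4.3]{HLN}, matching the half exact functor notions, identifying $\E^{F}_{R}$ with Sakai's subfunctor, and reading off the three clauses of Theorem \ref{th1} as the existence, right-exactness, and maximality statements of Proposition A --- is precisely the implicit argument. Nothing further is needed.
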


\textbf{Jian He}\\
Department of Mathematics, Nanjing University, 210093 Nanjing, Jiangsu, P. R. China\\
E-mail: \textsf{jianhe30@163.com}\\[0.3cm]
\textbf{Jing He}\\
College of Science, Hunan University of Technology and Business, 410205 Changsha, Hunan P. R. China\\
E-mail: \textsf{jinghe1003@163.com}\\[0.3cm]
\textbf{Panyue Zhou}\\
College of Mathematics, Hunan Institute of Science and Technology, 414006 Yueyang, Hunan, P. R. China.\\
E-mail: \textsf{panyuezhou@163.com}

\end{document}